\def\rr{{\mathbb R}}
\def\rn{{{\rr}^n}}
\def\zz{{\mathbb Z}}
\def\nn{{\mathbb N}}
\def\fz{\infty}
\def\ccc{{\mathbb C}}
\def\cs{{\mathcal S}}
\def\cp{{\mathcal P}}
\def\az{\alpha}
\renewcommand\tilde{\widetilde}
\def\supp{{\rm{\,supp\,}}}
\def\esup{\mathop\mathrm{\,ess\,sup\,}}
\def\einf{\mathop\mathrm{\,ess\,inf\,}}
\def\ls{\lesssim}
\def\lz{\lambda}
\def\ez{\varepsilon}
\def\vz{\varphi}
\def\sz{\sigma}
\def\wz{\widetilde}
\def\hs{\hspace{0.3cm}}
\def\r{\right}
\def\lf{\left}
\def\bint{{\ifinner\rlap{\bf\kern.30em--}
\int\else\rlap{\bf\kern.35em--}\int\fi}\ignorespaces}
\def\sbint{{\ifinner\rlap{\bf\kern.32em--}
\hspace{0.078cm}\int\else\rlap{\bf\kern.45em--}\int\fi}\ignorespaces}
\def\dsup{\displaystyle\sup}
\newtheorem{theorem}{Theorem}[section]
\newtheorem{lemma}[theorem]{Lemma}
\theoremstyle{definition}
\newtheorem{remark}[theorem]{Remark}
\newtheorem{definition}[theorem]{Definition}
\numberwithin{equation}{section}
\numberwithin{equation}{section}
\numberwithin{equation}{section}
\begin{document}

\arraycolsep=1pt

\title{\Large\bf Molecular Decomposition of Anisotropic Hardy Spaces with Variable Exponents \footnotetext{\hspace{-0.35cm} {\it 2010 Mathematics Subject Classification}.
{Primary 42B20; Secondary 42B30, 46E30.}
\endgraf{\it Key words and phrases.} Anisotropy, Hardy space, molecule, Calder\'on-Zygmund operator.
\endgraf This work is partially supported by the National
Natural Science Foundation of China (Grant Nos. 11861062 \& 11661075)
and the Natural Science Foundation of Xinjiang
Uygur Autonomous Region (Grant Nos. 2019D01C049, 62008031, 042312023).
 \endgraf $^\ast$\,Corresponding author
}}
\author{Wenhua Wang, Xiong Liu, Aiting Wang and Baode Li\,$^\ast$}
\date{  }
\maketitle

\vspace{-0.8cm}

\begin{center}
\begin{minipage}{13cm}\small
{\noindent{\bf Abstract} \
Let $A$ be an expansive dilation on $\mathbb{R}^n$,
and $p(\cdot):\mathbb{R}^n\rightarrow(0,\,\infty)$ be a variable exponent function satisfying the globally log-H\"{o}lder continuous condition. Let $H^{p(\cdot)}_A({\mathbb {R}}^n)$ be the variable anisotropic Hardy space defined via the non-tangential grand
maximal function. In this paper, the authors establish
its molecular decomposition, which is still new even in the classical isotropic setting (in the case $A:=2\mathrm I_{n\times n}$).
As applications, the authors obtain
the boundedness of anisotropic Calder\'on-Zygmund operators
from $H^{p(\cdot)}_{A}(\mathbb{R}^n)$ to $L^{p(\cdot)}(\mathbb{R}^n)$ or from $H^{p(\cdot)}_{A}(\mathbb{R}^n)$ to itself. }
\end{minipage}
\end{center}

\section{Introduction}
\hskip\parindent
As a good substitute of the Lebesgue space $L^p(\rn)$ when $p\in(0,1]$,
Hardy space $H^p(\rn)$ plays an important role in various fields of analysis and partial differential equations; see, for examples, \cite{fs72,s93,s60,sa89,t17, zz18}.
On the other hand, variable exponent function spaces have their applications in fluid dynamics \cite{am02}, image processing \cite{clr06}, partial differential equations and variational calculus \cite{f07,t17}.

Let $p(\cdot):\rn\rightarrow(0,\,\infty)$ be a variable exponent function satisfying the globally log-H\"{o}lder continuous condition (see Section \ref{s2} below for its definition). Recently, Nakai and Sawano \cite{ns12} introduced
the variable Hardy space $H^{p(\cdot)}({\mathbb {R}}^n)$, via the radial grand maximal function, and then obtained some real-variable characterizations of the space, such as the characterizations in terms of the atomic and the molecular decompositions. Moreover, they obtained the boundedness of $\delta$-type Calder\'on-Zygmund operators from $H^{p(\cdot)}(\rn)$ to $L^{p(\cdot)}(\rn)$ or from $H^{p(\cdot)}(\rn)$ to itself. Then Sawano \cite{s13}, Yang et al. \cite{yz16} and Zhuo et al. \cite{zy16} further contributed to the theory.

Very recently, Liu et al. \cite{lyy17x}
introduced the variable anisotropic Hardy space $H^{p(\cdot)}_A({\mathbb {R}}^n)$ associated with a general expansive matrix $A$, via the non-tangential grand maximal function, and then established its various real-variable characterizations of $H_A^{p(\cdot)}$, respectively, in terms of the atomic characterization and  the Littlewood-Paley characterization. Nevertheless, the molecular decomposition of $H^{p(\cdot)}_A({\mathbb {R}}^n)$ has
not been established till now. Once its molecular decomposition is established,
it can be conveniently used to prove the boundedness of many important operators on the space
$H^{p(\cdot)}_A({\mathbb {R}}^n)$, for example, one of the most famous operator in harmonic analysis, Calder\'on-Zygmund operators.
 To complete the theory of the variable anisotropic Hardy space $H^{p(\cdot)}_A({\mathbb {R}}^n)$, in this article, we establish the molecular decomposition of $H^{p(\cdot)}_A({\mathbb {R}}^n)$. Then, as applications, we further obtain
the boundedness of anisotropic Calder\'on-Zygmund operators
from $H^{p(\cdot)}_{A}(\rn)$ to $L^{p(\cdot)}(\rn)$ and from $H^{p(\cdot)}_{A}(\rn)$ to itself.

Precisely, this article is organized as follows.

In Section \ref{s2}, we first recall some notation and definitions
concerning expansive dilations, variable exponent, the variable Lebesgue space $L^{p(\cdot)}(\rn)$ and the variable anisotropic Hardy space $H^{p(\cdot)}_A({\mathbb {R}}^n)$, via the non-tangential grand maximal function. Then, motivated by Liu et al. \cite{lyy16,lyy17x} and Sawano et al. \cite{ns12}, we introduce the variable anisotropic molecular Hardy space $H^{p(\cdot),\,q,\,s,\,\varepsilon}_{A}(\rn)$
and establish its equivalence with $H^{p(\cdot)}_{A}(\rn)$ in Theorem \ref{t2.6}.
When it comes back to the isotropic setting, i.e., $A:=2{\rm I}_{n\times n}$,
the definition of the molecule in this article related to an integral size condition
covers the definition of molecule of Sawano et al. \cite[Definition 5.1]{ns12} related to a
 pointwise size condition. Thus, the molecular decomposition of $H^{p(\cdot)}_{2{\rm I}_{n\times n}}(\rn)$ in Theorem \ref{t2.6} also covers the molecular decomposition of $H^{p(\cdot)}(\rn)$ in \cite[Theorem 5.2]{ns12}, see Remarks \ref{r2.1x} and \ref{r2.7} for more details.

Section \ref{s3} is devoted to the proof of Theorem \ref{t2.6} via the
atomic characterization of $H^{p(\cdot)}_{A}(\rn)$ established in
\cite[Theorem 4.8]{lyy17x} (see also
Lemma \ref{l3.1} below). It is worth pointing
out that some of the proof methods of the molecular characterization of $H^{p}_{A}(\rn)=H^{p,\,p}_{A}(\rn)$ (\cite[Theorem 3.9]{lyy16}) and
$H^{p(\cdot)}(\rn)$ (\cite[Theorem 5.2]{ns12}) don't work anymore in the present setting.
For example, we search out some estimates related to $L^{p(\cdot)}(\rn)$ norms for some series of functions which can be reduced into dealing with the $L^q(\rn)$ norms of the corresponding functions (see Lemma \ref{l3.4} below). Then, by using this key lemma and the Fefferman-Stein vector-valued inequality of the Hardy-Littlewood maximal operator $M_{HL}$ on $L^{p(\cdot)}(\rn)$ (see Lemma \ref{l3.5} below), we prove
 their equivalences with $H^{p(\cdot)}_{A}(\rn)$ and $H^{p(\cdot),\,q,\,s,\,\varepsilon}_{A}(\rn)$.

In Section \ref{s4}, we first recall the definition of anisotropic
Calder\'on-Zygmund operators of Bownik \cite{b03}. Then, as an application of the molecular characterization of $H^{p(\cdot)}_{A}(\rn)$, we obtain the boundedness of anisotropic Calder\'on-Zygmund operators from $H^{p(\cdot)}_{A}(\rn)$ to $L^{p(\cdot)}(\rn)$ (see Theorem \ref{t4.1} below) or from $H^{p(\cdot)}_{A}$ to itself (see Theorem \ref{t4.2} below). Particularly, when $A:=2{\rm I}_{n\times n}$, we obtain the boundedness of the Riesz transforms (resp.,
when $n=1$, the Hilbert transform) from $H^{p(\cdot)}_{A}$ to $L^{p(\cdot)}$ or from $H^{p(\cdot)}_{A}$ to itself.

Finally, we make some conventions on notation.
Let $\nn:=\{1,\, 2,\,\ldots\}$ and $\zz_+:=\{0\}\cup\nn$.
For any $\az:=(\az_1,\ldots,\az_n)\in\zz_+^n:=(\zz_+)^n$, let
$|\az|:=\az_1+\cdots+\az_n$ and
$$\partial^\az:=
\lf(\frac{\partial}{\partial x_1}\r)^{\az_1}\cdots
\lf(\frac{\partial}{\partial x_n}\r)^{\az_n}.$$
Throughout the whole paper, we denote by $C$ a \emph{positive
constant} which is independent of the main parameters, but it may
vary from line to line.  For any $q\in[1,\,\infty]$, we denote by $q^{'}$ its conjugate index, namely, $1/q + 1/{q^{'}}=1$.
For any $a\in\rr$, $\lfloor a\rfloor$ denotes the
\emph{maximal integer} not larger than $a$.
The \emph{symbol} $D\ls F$ means that $D\le
CF$. If $D\ls F$ and $F\ls D$, we then write $D\sim F$.
If $E$ is a
subset of $\rn$, we denote by $\chi_E$ its \emph{characteristic
function}. If there are no special instructions, any space $\mathcal{X}(\rn)$ is denoted simply by $\mathcal{X}$. For instance, $L^2(\rn)$ is simply denoted by $L^2$.  Denote by $\cs$   the \emph{space of all Schwartz functions} and $\cs'$
its \emph{dual space} (namely, the \emph{space of all tempered distributions}).

%%%%%%%%%%%%%%%%%%%%%%%%%%%%%%%%%%%%%%%%%%%%%%%%%%%%%%%%%%%%%%%%%%

%%%%%%%%%%%%%%%%%%%%%% section 2 %%%%%%%%%%%%%%%%%%%%%%%%%%%%%%%%%%

%%%%%%%%%%%%%%%%%%%%%%%%%%%%%%%%%%%%%%%%%%%%%%%%%%%%%%%%%%%%%%%%%%%%%
\section{Molecular Characterization of $H^{p(\cdot)}_A$ \label{s2}}
\hskip\parindent
In this section, we first recall the notion of variable anisotropic Hardy space $H^{p(\cdot)}_{A}$, via the non-tangential grand maximal function $M_N(f)$, and then given its molecular characterization.

We begin with recalling the notion of {{expansive dilations}}
on $\rn$; see \cite[p.\,5]{b03}. A real $n\times n$ matrix $A$ is called an {\it
expansive dilation}, shortly a {\it dilation}, if
$\min_{\lz\in\sz(A)}|\lz|>1$, where $\sz(A)$ denotes the set of
all {\it eigenvalues} of $A$. Let $\lz_-$ and $\lz_+$ be two {\it positive numbers} such that
$$1<\lz_-<\min\{|\lz|:\ \lz\in\sz(A)\}\le\max\{|\lz|:\
\lz\in\sz(A)\}<\lz_+.$$ In the case when $A$ is diagonalizable over
$\mathbb C$, we can even take $\lz_-:=\min\{|\lz|:\ \lz\in\sz(A)\}$ and
$\lz_+:=\max\{|\lz|:\ \lz\in\sz(A)\}$.
Otherwise, we need to choose them sufficiently close to these
equalities according to what we need in our arguments.

It was proved in \cite[p.\,5, Lemma 2.2]{b03} that, for a given dilation $A$,
there exist a number $r\in(1,\,\fz)$ and a set $\Delta:=\{x\in\rn:\,|Px|<1\}$, where $P$ is some non-degenerate $n\times n$ matrix, such that $\Delta\subset r\Delta\subset A\Delta,$ and one can additionally
assume that $|\Delta|=1$, where $|\Delta|$ denotes the
$n$-dimensional Lebesgue measure of the set $\Delta$. Let
$B_k:=A^k\Delta$ for $k\in \zz.$ Then $B_k$ is open, $B_k\subset
rB_k\subset B_{k+1}$ and $|B_k|=b^k$, here and hereafter, $b:=|\det A|$.
An ellipsoid $x+B_k$ for some $x\in\rn$ and $k\in\zz$ is called a {\it dilated ball}.
Denote by $\mathfrak{B}$ the set of all such dilated balls, namely,
\begin{eqnarray}\label{e2.1}
\mathfrak{B}:=\{x+B_k:\ x\in \rn,\,k\in\zz\}.
\end{eqnarray}
Throughout the whole paper, let $\sigma$ be the {\it smallest integer} such that $2B_0\subset A^\sigma B_0$
and, for any subset $E$ of $\rn$, let $E^\complement:=\rn\setminus E$. Then,
for all $k,\,j\in\zz$ with $k\le j$, it holds true that
\begin{eqnarray}
&&B_k+B_j\subset B_{j+\sz},\label{e2.3}\\
&&B_k+(B_{k+\sz})^\complement\subset
(B_k)^\complement,\label{e2.4}
\end{eqnarray}
where $E+F$ denotes the {\it algebraic sum} $\{x+y:\ x\in E,\,y\in F\}$
of  sets $E,\, F\subset \rn$.

\begin{definition}
 A \textit{quasi-norm}, associated with
dilation $A$, is a Borel measurable mapping
$\rho_{A}:\rr^{n}\to [0,\infty)$, for simplicity, denoted by
$\rho$, satisfying
\begin{enumerate}
\item[\rm{(i)}] $\rho(x)>0$ for all $x \in \rn\setminus\{ \vec 0_n\}$,
here and hereafter, $\vec 0_n$ denotes the origin of $\rn$;

\item[\rm{(ii)}] $\rho(Ax)= b\rho(x)$ for all $x\in \rr^{n}$, where, as above, $b:=|\det A|$;

\item[\rm{(iii)}] $ \rho(x+y)\le H\lf[\rho(x)+\rho(y)\r]$ for
all $x,\, y\in \rr^{n}$, where $H\in[1,\,\fz)$ is a constant independent of $x$ and $y$.
\end{enumerate}
\end{definition}

In the standard dyadic case $A:=2{{\rm I}_{n\times n}}$, $\rho(x):=|x|^n$ for
all $x\in\rn$ is
an example of homogeneous quasi-norms associated with $A$, here and hereafter, ${\rm I}_{n\times n}$ denotes the $n\times n$ {\it unit matrix},
$|\cdot|$ always denotes the {\it Euclidean norm} in $\rn$.

It was proved, in \cite[p.\,6, Lemma 2.4]{b03},
that all homogeneous quasi-norms associated with a given dilation
$A$ are equivalent. Therefore, for a
given dilation $A$, in what follows, for simplicity, we
always use the {\it{step homogeneous quasi-norm}} $\rho$ defined by setting,  for all $x\in\rn$,
\begin{equation*}
\rho(x):=\sum_{k\in\zz}b^k\chi_{B_{k+1}\setminus B_k}(x)\ {\rm
if} \ x\ne \vec 0_n,\hs {\mathrm {or\ else}\hs } \rho(\vec 0_n):=0.
\end{equation*}
By \eqref{e2.3}, we know that, for all $x,\,y\in\rn$,
$$\rho(x+y)\le
b^\sz\lf(\max\lf\{\rho(x),\,\rho(y)\r\}\r)\le b^\sz[\rho(x)+\rho(y)];$$ see
\cite[p.\,8]{b03}. Moreover, $(\rn,\, \rho,\, dx)$ is a space of
homogeneous type in the sense of Coifman and Weiss \cite{cw71,cw77},
where $dx$ denotes the {\it $n$-dimensional Lebesgue measure}.

Now we recall that a measurable function $p(\cdot): \rn\rightarrow(0,\,\infty)$ is called a {\it variable exponent}. For any variable exponent $p(\cdot)$, let
\begin{eqnarray}\label{e2.5}
&&p_- :=\einf_{x\in\rn} p(x)\quad \mathrm{and} \ \ p_+ :=\esup_{x\in\rn} p(x).
\end{eqnarray}
Denote by $\cp$ the set of all variable exponents $p(\cdot)$ satisfying $0<p_-\leq p_+<\infty$.

Let $f$ be a measurable function on $\rn$ and $p(\cdot)\in\cp$. Then the {\it modular function} (or, for simplicity, the {\it modular}) $\varrho_{p(\cdot)}$, associated with $p(\cdot)$, is defined by setting
$$\varrho_{p(\cdot)}(f):=\int_\rn |f(x)|^{p(x)}\, dx$$
and the  {\it Luxemburg} (also called {\it Luxemburg-Nakano}) {\it quasi-norm} $\|f\|_{L^{p(\cdot)}}$ by
$$\|f\|_{L^{p(\cdot)}}:=\inf\lf\{\lambda \in(0,\,\infty):\varrho_{p(\cdot)}(f/\lambda)\leq 1\r\}.$$
Moreover, the {\it variable Lebesgue space} $L^{p(\cdot)}$ is defined to be the set of all measurable functions $f$
satisfying that $\varrho_{p(\cdot)}(f)<\infty$, equipped with the quasi-norm $\|f\|_{L^{p(\cdot)}}$.

The following remark comes from \cite[Remark 2.3]{lyy17}.
\begin{remark}\label{r2.1}
Let $p(\cdot)\in\cp$.
\begin{enumerate}
\item[\rm{(i)}]  Obviously, for any
$r\in(0,\,\infty)$ and $f\in L^{p(\cdot)}$,
$$\lf\|{|f|^r}\r\|_{L^{p(\cdot)}}=\lf\|f\r\|^r_{L^{rp(\cdot)}}.$$
Moreover, for any $\mu\in\ccc$ and $f,g\in L^{p(\cdot)}$,
$\lf\|\mu f\r\|_{L^{p(\cdot)}}=|\mu|\lf\|f\r\|_{L^{p(\cdot)}}$ and
$$\lf\|f+g\r\|^{\underline{p}}_{L^{p(\cdot)}}\leq\lf\|f\r\|^{\underline{p}}
_{L^{p(\cdot)}}+
\lf\|g\r\|^{\underline{p}}_{L^{p(\cdot)}},$$
here and hereafter,
\begin{align}\label{e2.5.1}
\underline{p}:=\min\{p_-,\,1\}
\end{align}
with $p_-$ as in \eqref{e2.5}. In particular, when $p_-\in[1,\,\infty]$, $L^{p(\cdot)}$ is a Banach space
(see \cite[Theorem 3.2.7]{dhh11}).
\item[\rm{(ii)}] It was proved in \cite[Proposition 2.21]{cf13} that, for any function $f\in L^{p(\cdot)}$
with $\lf\|f\r\|_{L^{p(\cdot)}}>0$,
$\varrho_{p(\cdot)}(f/{\|f\|_{L^{p(\cdot)}}})=1$
and, in \cite[Corollary 2.22]{cf13} that, if $\lf\|f\r\|_{L^{p(\cdot)}}\leq1$, then
$\varrho_{p(\cdot)}(f)\leq\lf\|f\r\|_{L^{p(\cdot)}}$.
\end{enumerate}
\end{remark}

A function $p(\cdot)\in \cp$ is said to satisfy the {\it globally log-H\"{o}lder continuous condition}, denoted by
$p(\cdot)\in C^{\log}$, if there exist two positive constants $C_{\log}(p)$ and $C_{\infty}$, and $p_{\infty}\in \rr$
such that, for any $x, y\in\rn$,
\begin{align*}
|p(x)-p(y)|\leq \frac{C_{\log}(p)}{\log(e+1/\rho(x-y))}
\end{align*}
and
\begin{align*}
|p(x)-p_{\infty}|\leq \frac{C_{\infty}}{\log(e+\rho(x))}.
\end{align*}

A $C^\infty$ function $\varphi$ is said to belong to the Schwartz class $\cs$ if,
for every integer $\ell\in\zz_+$ and multi-index $\alpha$,
$\|\varphi\|_{\alpha,\ell}:=\dsup_{x\in\rn}[\rho(x)]^\ell|\partial^\az\varphi(x)|
<\infty$.
The dual space of $\cs$, namely, the space of all tempered distributions on $\rn$ equipped with the weak-$\ast$
topology, is denoted by $\cs'$. For any $N\in\zz_+$, let
\begin{eqnarray*}
\cs_N:=\lf\{\varphi\in\cs:\ \|\varphi\|_{\alpha,\ell}\leq1,\ |\alpha|\leq N,\ \ \ell\leq N\r\}.
\end{eqnarray*}
In what follows, for $\varphi\in \cs$, $k\in\zz$ and $x\in\rn$, let $\varphi_k(x):= b^{-k}\varphi\lf(A^{-k}x\r)$.

\begin{definition}
Let $\varphi\in \cs$ and $f\in \cs'$. The{\it{ non-tangential maximal function}} $M_{\varphi}(f)$ with respect to $\varphi$ is defined by setting,
for any $x\in\rn$,
\begin{eqnarray*}
M_{\varphi}(f)(x):=\sup_{y\in x+B_k, k\in\zz}|f*\varphi_{k}(y)|.
\end{eqnarray*}
Moreover, for any given $N\in \nn$, the{\it{ non-tangential grand maximal function}} $M_{N}(f)$ of $f\in \cs'$ is defined by setting,
for any $x\in\rn$,
\begin{eqnarray*}
M_{N}(f)(x):=\sup_{\varphi\in \cs_N}M_{\varphi}(f)(x).
\end{eqnarray*}
\end{definition}

The following variable anisotropic Hardy space $H^{p(\cdot)}_{A}$ was introduced in \cite[Definition 2.4]{lyy17x}.
\begin{definition}\label{d2.4}
Let $p(\cdot)\in C^{\log}$, $A$ be a dilation and $N\in[\lfloor({1/\underline{p}}-1)/\ln\lambda_{-}\rfloor+2,\,\infty)$, where $\underline{p}$
is as in \eqref{e2.5.1}. The{\it{ variable anisotropic Hardy space}} $H_{A}^{p(\cdot)}$ is defined as
\begin{eqnarray*}
H_{A}^{p(\cdot)}:=\lf\{f\in \cs':M_{N}(f)\in L^{p(\cdot)}\r\}
\end{eqnarray*}
and, for any $f\in H_{A}^{p(\cdot)}$, let $\|f\|_{H_{A}^{p(\cdot)}}:=\|M_{N}(f)\|_{L^{p(\cdot)}}$.
\end{definition}
\begin{remark} Let $p(\cdot)\in C^{\log}$.
\begin{enumerate}
\item[\rm{(i)}]
The quasi-norm of $H_{A}^{p(\cdot)}$ in Definition \ref{d2.4} depends on $N$, however, by \cite[Theorem 3.10]{lyy17x}, we know that the $H_{A}^{p(\cdot)}$ is independent of the choice of $N$, as long as $N\in[\lfloor({1/\underline{p}}-1)/\ln\lambda_{-}\rfloor+2,\,\infty)$.
\item[\rm{(ii)}]  When
 $p(\cdot):=p$, where $p\in(0,\,\infty)$, the space $H^{p(\cdot)}_{A}$ is reduced to the anisotropic Hardy $H^{p}_{A}$ studied in \cite[Definition 3.11]{b03}.
%is reduced to \cite[Definition 3.7]{lyy16}.
\item[\rm{(iii)}]
When $A:=2{\rm I}_{n\times n}$,
the space $H^{p(\cdot)}_{A}$ is reduced to the variable Hardy space $H^{p(\cdot)}$ studied in \cite[p.\,3674]{ns12}.
\end{enumerate}
\end{remark}

The definition of variable isotropic molecules was introduced in \cite[Definition 5.1]{ns12}, and the definition of anisotropic
 molecules was introduced in \cite[Definition 3.7]{lyy16}. Motivated by the above two definitions, we introduce the definition of variable anisotropic molecules as follows.
\begin{definition}\label{d2.4x}
Let $p(\cdot)\in C^{\log}$, $q\in(1,\,\infty]$,
\begin{eqnarray}\label{e2.10}
s\in\lf[\lf\lfloor\lf(\frac{1}{p_-}-1\r)\frac{\ln b}{\ln\lambda_-}\r\rfloor,\,\infty\r)\cap\zz_+.
\end{eqnarray}
and $\varepsilon\in(0,\,\infty)$. A measurable function $m$ is called an
{\it{anisotropic $(p(\cdot),\,q,\,s,\,\ez)$-molecule}} associated with a dilated ball
$x_0+B_i\in\mathfrak{B}$ if
\begin{enumerate}
\item[\rm{(i)}]  for each
$j\in\zz_+$, $\|m\|_{L^q(U_j(x_0+B_i))}\le \frac{b^{-j\varepsilon}|B_{i}|^{1/q}}{\|\chi_{x_0+B_i}\|_{L^{p(\cdot)}}}$, where $U_0(x_0+B_i):=x_0+B_i$ and, for each $j\in\nn$, $U_j(x_0+B_{i}):=x_0+(A^jB_{i})\setminus(A^{j-1}B_{i})$;
\item[\rm{(ii)}] for all $\alpha\in \mathbb{Z}^n_+$ with $|\alpha|\leq s$, $\int_\rn m(x)x^\alpha dx=0$.
\end{enumerate}
\end{definition}
\begin{remark}\label{r2.1x}
Let $p(\cdot)\in C^{\log}$.
\begin{enumerate}
\item[(i)]
When the exponent function $p(\cdot)$ is reduced to the constant exponent $p$, i.e., $p(\cdot):=p\in(0,\,1]$,
the definition of
 the molecule in Definition \ref{d2.4x} is reduced to the molecule in \cite[Definition 3.7]{lyy16}.
\item[(ii)] When it comes back to the isotropic setting, i.e., $A:=2{\rm I}_{n\times n}$, and $\rho(x):= |x|^n$ for all $x\in\rn$, we claim that the definition of the molecule in \cite[Definition 5.1]{ns12} is also a
special form of the molecule in Definition \ref{d2.4}.
Recall that, in Sawano et al.\cite[Definition 5.1]{ns12}, it is obvious that the definition of molecule associated with a cube which can be equivalently replaced by a Euclidean ball
as follows.
Let $0< p_- \leq p_+ <q \leq \infty$, $q\geq1$ and $d\in\zz\cap[d_{p(\cdot)},\,\infty)$ with $d_{p(\cdot)}=\min \{d\in \nn \cup {0}: p_-(n+d+1)>n\}$ be fixed.
 A function $M$
is a $(p(\cdot),\,q)$-molecule centered at the ball $B=:B(x_0,\,r)\subset\rn$, centered at $x_0$ with radius $r$, if it satisfies the following conditions:
\begin{enumerate}
\item[\rm{(1)}]$M$ satisfies the estimate $$\|M\|_{L^q(B)}\leq\frac{|B|^{1/q}}{\|\chi_B\|_{L^{p(\cdot)}}};$$
\item[\rm{(2)}]For any $x\in B^{\complement}$, $M(x)$ satisfies the estimate  $$|M(x)|\leq\frac{\lf(1+\frac{|x-x_0|}{r}\r)^{-2n-2d-3}}
    {\|\chi_B\|_{L^{p(\cdot)}}};$$

\item[\rm{(3)}]If $\alpha$ is a multi-index with length less than or equal to $d$, then we have
$$\int_\rn x^\alpha M(x)\,dx=0.$$
\end{enumerate}

 To prove our claim, let $M$ be a $(p(\cdot),\,q)$-molecule as above. We only need to show that $M$ satisfies the above (ii) of Definition \ref{d2.4}, for each
$j\in\zz_+$,
\begin{align*}
\|M\|_{L^q(U_j(x_0,\,r))}&=\lf(\int_{U_j({B(x_0,\,r))}}|M(x)|^q\,dx\r)^{1/q}\\
&\leq\lf[\int_{U_j({B(x_0,\,r))}}\lf(1+\frac{|x-x_0|}{r}\r)^{q(-2n-2d-3)}\,dx
\r]^{1/q}\lf\|\chi_{B(x_0,\,r)}\r\|_{L^{p(\cdot)}}^{-1}\\
&\lesssim(1+2^j)^{-2n-2d-3}(2^jr)^{n/q}\lf\|\chi_{B(x_0,\,r)}\r\|_{L^{p(\cdot)
}}^{-1}\\
&\lesssim(2^j)^{-2n-2d-3}(2^j)^{n/q}\frac{|B(x_0,\,r)|^{1/q}}
{\lf\|\chi_{B(x_0,\,r)}\r\|_{L^{p(\cdot)}}}\\
&\thicksim2^{-j(2n+2d+3-n/q)}\frac{|B(x_0,\,r)|^{1/q}}
{\lf\|\chi_{B(x_0,\,r)}\r\|_{L^{p(\cdot)}}}\\
&\thicksim2^{-j\varepsilon}\frac{|B(x_0,\,r)|^{1/q}}
{\lf\|\chi_{B(x_0,\,r)}\r\|_{L^{p(\cdot)}}},
\end{align*}
where $\varepsilon:=2n+2d+3-{n}/{q}>0$. Thus, the above claim holds true.
\end{enumerate}
\end{remark}

In what follows, we call an anisotropic $(p(\cdot),\,q,\,s,\,\varepsilon)$-molecule simply by
$(p(\cdot),\,q,\,s,\,\varepsilon)$-molecule. Via $(p(\cdot),\,q,\,s,\,\varepsilon)$-molecules,
we introduce the following variable anisotropic  molecular Hardy space
$H^{p(\cdot),\,q,\,s,\,\varepsilon}_{A}$.

\begin{definition}\label{d2.5}
Let $p(\cdot)\in C^{\log}$, $q\in(1,\,\infty]$, $A$ be a dilation and
let $s$ be as in \eqref{e2.10}.
 The {\it variable anisotropic molecular Hardy space}
$H^{p(\cdot),\,q,\,s,\,\varepsilon}_{A}$
is defined to be the set of all distributions $f\in \cs'$ satisfying that there
exist $\{\lambda_i\}_{i\in\nn}\subset\ccc$ and a sequence of
$(p(\cdot),\,q,\,s,\,\varepsilon)$-molecules, $\{m_i\}_{i\in\nn}$, associated, respectively,
with $\{{B^{(i)}}\}_{i\in\nn}\subset\mathfrak{B}$ such that
\begin{align*}
f=\sum_{i\in\nn} \lz_{i}m_i \ \ \mathrm{in\ } \ \cs'.
\end{align*}
Moreover, for any $f\in H^{p(\cdot),\,q,\,s,\,\varepsilon}_{A}$, let
$$\|f\|_{H^{p(\cdot),\,q,\,s,\,\varepsilon}_{A}}
:=\inf \lf\|\lf\{\sum_{i\in\nn} \lf[\frac{|\lambda_i|\chi_{{B^{(i)}}}}{\|\chi_{{B^{(i)}}}
\|_{L^{p(\cdot)}}}\r]^{\underline{p}}\r\}^{1/\underline{p}}\r\|
_{L^{p(\cdot)}},$$
where the infimum is taken over all the decompositions of $f$ as above.
\end{definition}

The following Theorem \ref{t2.6} shows the molecular characterization of
$H^{p(\cdot)}_{A}$, whose proof is given in the next section.
\begin{theorem}\label{t2.6}
Let $p(\cdot)\in C^{\log}$ and $q\in(1,\,\infty]\cap(p_+,\,\infty]$ with $p_+$ as in \eqref{e2.5},
$s$ be as in \eqref{e2.10}, $\varepsilon\in (\max\{1,\,(s+1)\log_b(\lambda_+)\},\,\infty)$ and $N\in\nn\cap[\lfloor(1/{\underline{p}}-1) {\ln b/\ln \lambda_-}\rfloor+2,\,\infty)$
with $\underline{p}$ as in \eqref{e2.5.1}.
Then $$H^{p(\cdot)}_{A}=H^{p(\cdot),\,q,\,s,\,\varepsilon}_{A}$$ with equivalent quasi-norms.
\end{theorem}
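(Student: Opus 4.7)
My plan is to prove the two inclusions separately, using the atomic characterization of $H^{p(\cdot)}_{A}(\rn)$ from Lemma \ref{l3.1} as the common bridge. For \emph{the easy direction} $H^{p(\cdot)}_A \hookrightarrow H^{p(\cdot),\,q,\,s,\,\ez}_A$, I would expand $f \in H^{p(\cdot)}_A$ as a convergent atomic series $f=\sum_i\lambda_i a_i$ via Lemma \ref{l3.1}, and then observe that every $(p(\cdot),\,q,\,s)$-atom $a_i$ supported in a dilated ball $B^{(i)}$ is automatically a $(p(\cdot),\,q,\,s,\,\ez)$-molecule: the size bound in Definition \ref{d2.4x}(i) at $j=0$ coincides with the atomic size condition (since $b^0=1$), for $j\ge 1$ one has $a_i\equiv 0$ on $U_j(B^{(i)})$ so the inequality is vacuous, and the vanishing moment condition is shared verbatim.

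For \emph{the hard direction} $H^{p(\cdot),\,q,\,s,\,\ez}_A \hookrightarrow H^{p(\cdot)}_A$, I would decompose each molecule into atoms. Starting from $f=\sum_i\lambda_i m_i$ with $m_i$ associated to $B^{(i)}$, I split annularly $m_i=\sum_{j\ge0}m_i\chi_{U_j(B^{(i)})}$ and apply a Taibleson--Weiss polynomial-correction scheme: add and subtract polynomials $P_{i,j}$ of degree at most $s$ supported on $A^{j}B^{(i)}$ so that the resulting blocks acquire vanishing moments up to order $s$, remain supported in $A^{j+\sigma}B^{(i)}$, and still telescope to $m_i$ in $\cs'$. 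Controlling the coefficients of $P_{i,j}$ by the moments of $m_i\chi_{U_j}$ (whose polynomial growth is at most $b^{j(s+1)\log_b \lambda_+}$ from the factor $x^\alpha$) together with the molecular size bound from Definition \ref{d2.4x}(i) yields, after renormalization, an atomic representation $m_i=\sum_{j\ge 0}\mu_{i,j}a_{i,j}$ in which each $a_{i,j}$ is a $(p(\cdot),\,q,\,s)$-atom supported in $A^{j+\sigma}B^{(i)}$ and
\[
|\mu_{i,j}|\ls b^{-j[\ez-(s+1)\log_b\lambda_+]}\,\frac{\|\chi_{A^{j+\sigma}B^{(i)}}\|_{L^{p(\cdot)}}}{\|\chi_{B^{(i)}}\|_{L^{p(\cdot)}}},
\]
whose geometric decay in $j$ is exactly what the assumption $\ez>(s+1)\log_b\lambda_+$ provides.

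It remains to dominate the assembled atomic coefficient quasi-norm by the molecular one. For this I would interchange the order of summation, sum the geometric factor over $j$, and replace $\chi_{A^{j+\sigma}B^{(i)}}$ pointwise by $b^{jr}[M_{HL}(\chi_{B^{(i)}})]^{r}$ for a small $r\in(0,\,\underline{p})$ chosen so that $p(\cdot)/r\in C^{\log}$ and $(p/r)_->1$. Lemma \ref{l3.4} then reduces the $L^{p(\cdot)}$ quasi-norm of the resulting square-sum to an $L^q$ estimate on level sets, and the Fefferman--Stein vector-valued inequality of Lemma \ref{l3.5} on $L^{p(\cdot)/r}(\ell^{\underline{p}/r})$ absorbs the growth factor $b^{jr}$ and recovers the molecular norm; a final application of Lemma \ref{l3.1} places $f$ in $H^{p(\cdot)}_A$. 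The main obstacle is precisely this last step: the naive $\ell^{\underline{p}}$-Minkowski interchange that is available for constant exponent $p$ is not valid for a true variable $p(\cdot)$, and the twin hypotheses $\ez>\max\{1,(s+1)\log_b\lambda_+\}$ and $q>p_+$ are exactly what allow $r$ to be selected inside the admissible range for Lemma \ref{l3.5} while still beating the geometric growth from the molecular annuli.
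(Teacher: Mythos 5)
Your first inclusion is exactly the paper's: atoms are molecules, so Lemma \ref{l3.1} gives $H^{p(\cdot)}_A\subset H^{p(\cdot),\,q,\,s,\,\varepsilon}_A$ at once. For the converse inclusion, however, you take a genuinely different route from the paper, and as sketched it does not close under the stated hypothesis on $\varepsilon$. The paper never decomposes a molecule into atoms: it estimates $M_N(\sum_i\lambda_im_i)$ directly, splitting into the part on $A^{\sigma}B^{(i)}$ (handled by duality, H\"older on the annuli $U_j(B^{(i)})$, the bound $\sum_{j}b^{j(1/q'-\varepsilon)}<\infty$ from $\varepsilon>1$, and Lemma \ref{l3.4}) and the part off $A^{\sigma}B^{(i)}$, where the pointwise bound \eqref{e3.12} gives $M_N(m_i)\ls\|\chi_{B^{(i)}}\|_{L^{p(\cdot)}}^{-1}[M_{HL}(\chi_{B^{(i)}})]^{\beta}$ with $\beta=1+(s+1)\log_b\lambda_->1/\underline{p}$ as in \eqref{e3.12.1}. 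The crucial point is that in the paper the exponent $\beta$ that must exceed $1/\underline{p}$ comes from the number $s$ of vanishing moments; the hypothesis on $\varepsilon$ is only used to make the tail of the molecule small enough that this moment argument and the near-part sum go through.

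In your Taibleson--Weiss scheme the role of $\beta$ is instead played by the coefficient decay $\delta:=\varepsilon-(s+1)\log_b\lambda_+$, and the hypothesis $\varepsilon>\max\{1,\,(s+1)\log_b\lambda_+\}$ only guarantees $\delta>0$. That is not enough. After reindexing, your atomic coefficient norm contains $\sum_{i,j}\bigl[|\lambda_i|b^{-j\delta}\chi_{A^{j}B^{(i)}}/\|\chi_{B^{(i)}}\|_{L^{p(\cdot)}}\bigr]^{\underline{p}}$, and to remove the expanded supports one must dominate $\chi_{A^{j}B^{(i)}}\le b^{j\theta}[M_{HL}(\chi_{B^{(i)}})]^{\theta}$ and apply Lemma \ref{l3.5}; the Fefferman--Stein inequality there requires the inner $\ell$-exponent to exceed $1$ and the rescaled Lebesgue exponent to have lower bound above $1$, which forces $\theta>1/\underline{p}$, and the $j$-sum then converges only if $\delta>\theta>1/\underline{p}$. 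The inequality $\varepsilon>(s+1)\log_b\lambda_++1/\underline{p}$ is strictly stronger than what the theorem assumes (take $\lambda_+$ large and $\varepsilon$ just above $(s+1)\log_b\lambda_+$), so your argument proves the theorem only for a smaller range of $\varepsilon$. Your proposed fix of taking $r\in(0,\,\underline{p})$ small points the wrong way: after rescaling to $L^{p(\cdot)/r}(\ell^{\underline{p}/r})$ the geometric factor becomes $b^{-j(r\delta-1)\underline{p}/r}$, so small $r$ makes the required condition $r\delta>1$ harder, not easier. To prove the theorem as stated you should abandon the atomic re-decomposition and estimate $M_N(m_i)$ directly off $A^{\sigma}B^{(i)}$, extracting the exponent $\beta>1/\underline{p}$ from the vanishing moments as the paper does.
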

\begin{remark}\label{r2.7}
Let $p(\cdot)\in C^{\log}$.
\begin{enumerate}
\item[(i)]Liu et al. \cite{lyy17} introduced the anisotropic variable Hardy-Lorentz space $H^{p(\cdot),\,q}
_A$, where $p(\cdot) : {\mathbb {R}}^n\rightarrow
(0,\infty]$ is a variable exponent function satisfying the so-called globally log-H\"{o}lder continuous condition and $q\in(0,\infty]$.
When the exponent function $p(\cdot)$ is reduced to the constant exponent $p$, i.e., $p(\cdot):=p\in(0,\,1]$, the space $H^{ p(\cdot),\,q}_
A$ is just the
anisotropic Hardy-Lorentz space $H^{p,\,q}_A$ in \cite{lyy16}. When $q=p$,  the
molecular characterization of $H^{p(\cdot)}_{A}$ in Theorem \ref{t2.6} is reduced to the molecular characterization of anisotropic Hardy spaces $H^{p}_{A}$=$H^{p,\,p}_{A}$ in \cite[Theorem 3.9]{lyy16} and also covers the molecular characterization of
$H^\vz_A({\mathbb {R}}^n)$ with $\vz(x,\,t):=t^p$ in \cite[Theorem 2.10]{lffy16}, where the molecule is defined associated with a pointwise size condition.
\item[(ii)] When it comes back to the isotropic setting, i.e., $A:=2{\rm I}_{n\times n}$, the
 molecular characterization of $H^{p(\cdot)}_{A}$ in Theorem \ref{t2.6} is still new and covers the
 molecular characterization of $H^{p(\cdot)}$ in \cite[Theorem 5.2]{ns12}.
\end{enumerate}
\end{remark}

%%%%%%%%%%%%%%%%%%%%%%%%%%%%%%%%%%%%%%%%%%%%%%%%%%%%%%%%%%%%%%%%%%%%%%

\section{Proof of Theorem \ref{t2.6}\label{s3}}
\hskip\parindent
To show Theorem \ref{t2.6},
we begin with
the following notion of anisotropic $(p(\cdot),\,q,\,s)$-atoms introduced in \cite[Definition 4.1]{lyy17}.

\begin{definition}\label{d3.1}
Let $p(\cdot)\in\cp$, $q\in(1,\,\infty]$ and
$s\in[\lfloor(1/{p_-}-1) {\ln b/\ln \lambda_-}\rfloor,\,\infty)\cap\zz_+$ with $p_-$ as in \eqref{e2.5}. An {\it anisotropic $(p(\cdot),\,q,\,s)$-atom} is a measurable function $a$ on $\rn$ satisfying
\begin{enumerate}
\item[\rm{(i)}]  $\supp a\subset B$, where $B\in\mathfrak{B}$ and $\mathfrak{B}$ is as in \eqref{e2.1};
\item[\rm{(ii)}] $\|a\|_{L^q}\le \frac{|B|^{1/q}}{\|\chi_B\|_{L^{p(\cdot)}}}$;
\item[\rm{(iii)}] $\int_\rn a(x)x^\alpha dx=0$ for any $\alpha\in \mathbb{Z}^n_+$ with $|\alpha|\leq s$.
\end{enumerate}
\end{definition}
Throughout this article, we call an anisotropic $(p(\cdot),\,q,\,s)$-atom simply by
a $(p(\cdot),\,q,\,s)$-atom. The following variable anisotropic atomic Hardy space
was introduced in \cite[Definition 4.2]{lyy17x}
\begin{definition}
Let $p(\cdot)\in C^{\log}$, $q\in(1,\,\infty]$, $A$ be a dilation and
$s$ be as in \eqref{e2.10}.
 The {\it variable anisotropic atomic Hardy space}
$H^{p(\cdot),\,q,\,s}_{A}$
is defined to be the set of all distributions $f\in \cs'$ satisfying that there
exist $\{\lambda_i\}_{i\in\nn}\subset\ccc$ and a sequence of
$(p(\cdot),\,q,\,s)$-atoms, $\{a_i\}_{i\in\nn}$, supported, respectively,
on $\{{B^{(i)}}\}_{i\in\nn}\subset\mathfrak{B}$ such that
\begin{align*}
f=\sum_{i\in\nn} \lz_{i}a_i \ \ \mathrm{in\ } \ \cs'.
\end{align*}
Moreover, for any $f\in H^{p(\cdot),\,q,\,s}_{A}$, let
$$\|f\|_{H^{p(\cdot),\,q,\,s}_{A}}
:=\inf \lf\|\lf\{\sum_{i\in\nn} \lf[\frac{|\lambda_i|\chi_{{B^{(i)}}}}{\|\chi_{{B^{(i)}}}
\|_{L^{p(\cdot)}}}\r]^{\underline{p}}\r\}^{1/\underline{p}}\r\|
_{L^{p(\cdot)}},$$
where the infimum is taken over all the decompositions of $f$ as above.
\end{definition}

The following lemma reveals the atomic characterization of the variable anisotropic Hardy space
(see \cite[Theorem 4.8]{lyy17x}).

\begin{lemma}\label{l3.1}
Let $p(\cdot)\in C^{\log}$, $q\in(\max\{p_+,\,1\},\,\infty]$
with $p_+$ as in \eqref{e2.5},
$s\in[\lfloor(1/{p_-}-1) {\ln b/\ln \lambda_-}\rfloor,\,\infty)\cap\zz_+$ with $p_-$ as in \eqref{e2.5} and
$N\in\nn\cap[\lfloor(1/{\underline{p}}-1) {\ln b/\ln \lambda_-}\rfloor+2,\,\infty)$.
Then $$H^{p(\cdot)}_{A}=H^{p(\cdot),\,q,\,s}_{A}$$ with equivalent quasi-norms.
\end{lemma}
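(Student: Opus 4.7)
The plan is to establish the two continuous embeddings $H^{p(\cdot),\,q,\,s}_A\subset H^{p(\cdot)}_A$ and $H^{p(\cdot)}_A\subset H^{p(\cdot),\,q,\,s}_A$ separately. For the first (easier) direction, I begin with an atomic representation $f=\sum_{i\in\nn}\lz_i a_i$ in $\cs'$, where each $a_i$ is a $(p(\cdot),q,s)$-atom supported in a dilated ball $B^{(i)}=x_i+B_{\ell_i}$. I estimate $M_N(f)$ by splitting each term $M_N(\lz_i a_i)$ according to the decomposition $\chi_{A^\sz B^{(i)}}+\chi_{(A^\sz B^{(i)})^\complement}$. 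On the enlargement $A^\sz B^{(i)}$, I use the pointwise bound $M_N(a_i)\ls[M_{HL}(|a_i|^r)]^{1/r}$ for a fixed $r\in(0,\underline p)$ together with the $L^q$ size estimate of atoms. On the complement, the vanishing-moment condition of order $s$ yields the pointwise decay $M_N(a_i)(x)\ls\|\chi_{B^{(i)}}\|_{L^{p(\cdot)}}^{-1}[b^{\ell_i}/\rho(x-x_i)]^{1+(s+1)\ln\lz_-/\ln b}$. Summing and taking the $L^{p(\cdot)}$ quasi-norm, I invoke the Fefferman-Stein vector-valued inequality for $M_{HL}$ on $L^{p(\cdot)/r}$ (valid since $p(\cdot)/r\in C^{\log}$ has lower index greater than $1$), producing $\|f\|_{H^{p(\cdot)}_A}\ls\|f\|_{H^{p(\cdot),\,q,\,s}_A}$.

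For the converse direction, I would apply the anisotropic Calder\'on-Zygmund decomposition (in the style of Bownik \cite{b03}) at each dyadic level $\Omega_k:=\{x\in\rn:M_N(f)(x)>2^k\}$, $k\in\zz$. A Whitney-type covering of $\Omega_k$ produces dilated balls $\{x^k_j+B_{\ell^k_j}\}_j$ with bounded overlap, an adapted smooth partition of unity $\{\eta^k_j\}_j$, and polynomials $P^k_j$ of degree $\leq s$ such that $f\eta^k_j-P^k_j$ has the correct vanishing moments. Telescoping in $k$ would give a distributional decomposition $f=\sum_{k,j}\lz^k_j a^k_j$ in $\cs'$ where each $a^k_j$, after normalization, is a $(p(\cdot),q,s)$-atom supported in a fixed dilate of $x^k_j+B_{\ell^k_j}$, with $|\lz^k_j|\sim 2^k\|\chi_{x^k_j+B_{\ell^k_j}}\|_{L^{p(\cdot)}}$. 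The $L^q$ size bound for $a^k_j$ would rely on $q>\max\{p_+,1\}$ and the $L^q$-boundedness of $M_N$ restricted to the Calder\'on-Zygmund bad set, while the vanishing moments arise from the construction via $P^k_j$.

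The main obstacle will be proving the quasi-norm bound
\begin{align*}
\lf\|\lf(\sum_{k,j}\lf[\frac{|\lz^k_j|\chi_{x^k_j+B_{\ell^k_j}}}{\|\chi_{x^k_j+B_{\ell^k_j}}\|_{L^{p(\cdot)}}}\r]^{\underline p}\r)^{1/\underline p}\r\|_{L^{p(\cdot)}}\ls\|M_N(f)\|_{L^{p(\cdot)}}.
\end{align*}
Substituting $|\lz^k_j|\sim 2^k\|\chi_{x^k_j+B_{\ell^k_j}}\|_{L^{p(\cdot)}}$ and invoking the bounded overlap of the Whitney cover, the left side reduces to $\|(\sum_k 2^{k\underline p}\chi_{\Omega_k})^{1/\underline p}\|_{L^{p(\cdot)}}$, which by a geometric-series argument is comparable to $\|M_N(f)\|_{L^{p(\cdot)}}$. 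Verifying this comparison uniformly in the variable exponent is the delicate point; one exploits the globally log-H\"older continuity of $p(\cdot)$, which ensures that $\|\chi_B\|_{L^{p(\cdot)}}\sim|B|^{1/p(x_B)}$ on any dilated ball $B$ with reference point $x_B$, guaranteeing scale-invariant behaviour of the normalization factor as the Whitney balls vary in both location and scale.
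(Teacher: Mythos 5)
First, a point of reference: the paper offers no proof of this lemma at all --- it is quoted verbatim from \cite[Theorem 4.8]{lyy17x} --- so there is no internal argument to compare yours against. Your two-embedding strategy (maximal-function estimates for $H^{p(\cdot),\,q,\,s}_A\subset H^{p(\cdot)}_A$, and a Calder\'on--Zygmund/Whitney decomposition of the level sets $\Omega_k=\{M_N(f)>2^k\}$ for the converse) is exactly the architecture of the cited proof and of Bownik's constant-exponent antecedent, so at the level of strategy you are on target.

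There is, however, one step that fails as written. In the first direction you control $M_N(a_i)$ on the enlarged ball $A^\sigma B^{(i)}$ by the pointwise bound $M_N(a_i)\lesssim[M_{HL}(|a_i|^r)]^{1/r}$ with $r\in(0,\,\underline{p})$, so as to apply the vector-valued maximal inequality on $L^{p(\cdot)/r}$. For $r<1$ this pointwise bound is false: take $a$ to be an $L^1$-normalized bump supported in a ball of radius $1/N$; then $M_N(a)(x)\gtrsim1$ at distance $1$ from the support (test against a single $\varphi$ at unit scale), while $[M_{HL}(|a|^r)(x)]^{1/r}\sim N^{n(r-1)/r}\to0$ as $N\to\infty$. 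The domination $M_N(f)\lesssim M_{HL}(f)$ holds only with $r=1$, which is useless here since $p_-\le1$ is allowed. The correct substitute --- and the device this paper itself leans on in its molecular argument --- is to estimate the local part in $L^{\tilde q}$-norm for some $\tilde q\in(\max\{p_+,1\},\,q)$ via the $L^{\tilde q}$-boundedness of $M_N$, and then invoke Lemma \ref{l3.4}, which converts uniform $L^{\tilde q}$ bounds for functions supported on the dilated balls into the desired $L^{p(\cdot)}$ estimate; no pointwise domination is needed. A smaller remark on the converse direction: the ``delicate point'' you single out is not where the difficulty lies. With $|\lz^k_j|\sim2^k\|\chi_{x^k_j+B_{\ell^k_j}}\|_{L^{p(\cdot)}}$ the normalizing factors cancel identically in the quasi-norm, and the bound reduces to the pointwise comparison $\sum_{k}2^{k\underline{p}}\chi_{\Omega_k}\sim[M_N(f)]^{\underline{p}}$, valid for any exponent; no equivalence of the form $\|\chi_B\|_{L^{p(\cdot)}}\sim|B|^{1/p(x_B)}$ is required (and that equivalence in any case holds only for small balls, with $p_\infty$ replacing $p(x_B)$ for large ones). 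The genuine labor in that direction is the convergence of the decomposition in $\cs'$, the vanishing of the polynomial corrections in the limit, and the density of $H^{p(\cdot)}_A\cap L^q$ in $H^{p(\cdot)}_A$, none of which your sketch addresses.
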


The following lemma is just \cite[Lemma 4.5]{lyy17}.
\begin{lemma}\label{l3.4}
Let $p(\cdot)\in C^{\log}$, $t\in(0,\,\underline{p}]$ with $\underline{p}$ as in \eqref{e2.5.1} and $r\in[1,\,\infty]\cap(p_+,\,\infty]$ with $p_+$ as in \eqref{e2.5}.
Then there exists a positive constant $C$ such that, for any sequence $\{B^{(k)}\}_{k\in\nn}\subset\mathfrak{B}$ of dilated
balls, numbers $\{\lambda_k\}_{k\in\nn}\subset\ccc$ and measurable functions $\{a_k\}_{k\in\nn}$ satisfying that, for each
$k\in\nn$, $\supp a_k\subset B^{(k)}$ and $\|a_k\|_{L^r}\leq|B^{(k)}|^{1/r}$, it holds true that
$$\lf\|\lf(\sum_{k\in\nn}\lf|\lambda_ka_k\r|^t\r)^{1/t}\r\|_{L^{p(\cdot)}}\leq C
\lf\|\lf(\sum_{k\in\nn}\lf|\lambda_k\chi_{B^{(k)}}\r|^t\r)^{1/t}
\r\|_{L^{p(\cdot)}}.$$
\end{lemma}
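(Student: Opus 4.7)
The plan is to reduce the vector-valued inequality to the scalar boundedness of the anisotropic Hardy--Littlewood maximal operator $M_{HL}$ on a suitable variable Lebesgue space by a duality argument. Using the identity $\||F|^{1/t}\|_{L^{p(\cdot)}}=\|F\|^{1/t}_{L^{p(\cdot)/t}}$ from Remark \ref{r2.1}(i), it suffices to establish
$$\Big\|\sum_{k\in\nn}|\lambda_k a_k|^t\Big\|_{L^{q(\cdot)}}\ls \Big\|\sum_{k\in\nn}|\lambda_k|^t\chi_{B^{(k)}}\Big\|_{L^{q(\cdot)}},$$
where $q(\cdot):=p(\cdot)/t$. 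Since $t\in(0,\underline{p}]$ and $\underline{p}=\min\{p_-,1\}$, the exponent $q(\cdot)$ satisfies $q_-\ge 1$ and $q(\cdot)\in C^{\log}$, so $\|\cdot\|_{L^{q(\cdot)}}$ admits an associate-space characterization via pairing with nonnegative test functions $g$ with $\|g\|_{L^{q'(\cdot)}}\le 1$.

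Fix such a test $g\ge 0$. For each $k\in\nn$, since $\supp a_k\subset B^{(k)}$, H\"{o}lder's inequality with conjugate exponents $r/t$ and $(r/t)'$ (observe $r/t>1$ because $r>p_+\ge\underline{p}\ge t$) together with $\|a_k\|_{L^r}\le|B^{(k)}|^{1/r}$ gives $\int_\rn|\lambda_k a_k|^tg\le|\lambda_k|^t|B^{(k)}|^{t/r}\|g\chi_{B^{(k)}}\|_{L^{(r/t)'}}$. For any $y\in B^{(k)}$, using $B^{(k)}$ itself as an admissible dilated ball through $y$ yields $\|g\chi_{B^{(k)}}\|_{L^{(r/t)'}}\le|B^{(k)}|^{1/(r/t)'}[M_{HL}(g^{(r/t)'})(y)]^{1/(r/t)'}$. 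Since $t/r+1/(r/t)'=1$, we obtain, for every $y\in B^{(k)}$,
$$\int_\rn|\lambda_k a_k|^tg\le|\lambda_k|^t|B^{(k)}|[M_{HL}(g^{(r/t)'})(y)]^{1/(r/t)'},$$
and averaging in $y$ over $B^{(k)}$ linearizes this as $\int_\rn|\lambda_k a_k|^tg\le|\lambda_k|^t\int_\rn\chi_{B^{(k)}}(y)[M_{HL}(g^{(r/t)'})(y)]^{1/(r/t)'}\,dy$.

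Summing over $k\in\nn$ and invoking H\"{o}lder's inequality in the pair $(L^{q(\cdot)},L^{q'(\cdot)})$ gives
$$\sum_k\int_\rn|\lambda_k a_k|^tg\le \Big\|\sum_k|\lambda_k|^t\chi_{B^{(k)}}\Big\|_{L^{q(\cdot)}}\cdot\|M_{HL}(g^{(r/t)'})\|^{1/(r/t)'}_{L^{q'(\cdot)/(r/t)'}}.$$
Because $r>p_+$, the compound exponent $q'(\cdot)/(r/t)'$ lies in $C^{\log}$ with infimum strictly greater than $1$; hence the anisotropic boundedness of $M_{HL}$ on $L^{q'(\cdot)/(r/t)'}$ bounds the second factor by $\|g\|_{L^{q'(\cdot)}}\le 1$. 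Taking the supremum over admissible $g$ and undoing the $t$-th power substitution delivers the claim.

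The principal technical obstacle is precisely the final verification that the dualized compound exponent $q'(\cdot)/(r/t)'$ is in $C^{\log}$ and has lower bound strictly above $1$, which is where the strict separation $r>p_+$ is genuinely used (it provides the headroom that keeps the infimum of the dualized exponent away from the critical value $1$). One must also invoke the anisotropic associate-space representation of $\|\cdot\|_{L^{q(\cdot)}}$ at the outset, which is a standard but nontrivial fact requiring $q(\cdot)\in C^{\log}$ and $q_-\ge 1$.
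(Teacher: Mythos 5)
The paper does not actually prove Lemma \ref{l3.4}: it imports it verbatim from \cite[Lemma 4.5]{lyy17}, and your argument is essentially the proof given there --- raise to the power $t$, dualize $L^{p(\cdot)/t}$ against $L^{(p(\cdot)/t)'}$, apply H\"{o}lder with exponents $r/t$ and $(r/t)'$ on each $B^{(k)}$, dominate the resulting local average of $g^{(r/t)'}$ by the anisotropic maximal function, and close with the boundedness of $M_{HL}$ on the dualized compound exponent space. The only point needing care is the endpoint $t=\underline{p}=p_-$ (the case actually used later in the paper), where $q_-=1$ and $s(\cdot):=q'(\cdot)/(r/t)'$ may have $s_+=\infty$, so it does not literally belong to $C^{\log}$ as defined here (which presupposes a finite supremum); one must instead invoke the version of the maximal theorem formulated in terms of $1/s(\cdot)$ being globally log-H\"{o}lder with $\sup_{x}1/s(x)<1$, which your computation $1/s(\cdot)=\tfrac{r}{r-t}\bigl(1-t/p(\cdot)\bigr)\le\tfrac{r(p_+-t)}{p_+(r-t)}<1$ (using precisely $r>p_+$) does verify.
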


We recall the definition of anisotropic Hardy-Littlewood maximal function $M_{HL}(f)$. For any $f\in L_{loc}^1$ and $x\in \rn$,
\begin{align}\label{e2.9}
M_{HL}(f)(x):=\sup_{x\in B\in\mathfrak{B}}\frac{1}{|B|}\int_{B}|f(z)|\,dz,
\end{align}
where $\mathfrak{B}$ is as in \eqref{e2.1}.

The following lemma is just \cite[Lemma 4.3]{lyy17}.
\begin{lemma}\label{l3.5}
Let $q\in(1,\,\infty]$. Assume that $p(\cdot)\in C^{\log}$ satisfies $1<p_-\leq p_+ <\infty$, where $p_-$ and $p_+$ are as in \eqref{e2.5}.
Then there exists a positive constant $C$ such that, for any sequence $\{f_k\}_{k\in\nn}$ of measurable functions,
$$\lf\|\lf\{\sum_{k\in\nn}\lf[M_{HL}(f_k)\r]^q\r\}^{1/q}\r\|_{L^{p(\cdot)}}\leq C
\lf\|\lf(\sum_{k\in\nn}|f_k|^q\r)^{1/q}\r\|_{L^{p(\cdot)}}$$
with the usual modification made when $q=\infty$, where $M_{HL}$ denotes the Hardy-Littlewood maximal operator as in \eqref{e2.9}.
\end{lemma}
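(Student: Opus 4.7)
The plan is to deduce the vector-valued inequality from its scalar weighted analogue by invoking the Rubio de Francia extrapolation principle, adapted to the variable Lebesgue space setting on the space of homogeneous type $(\rn,\rho,dx)$.

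The first ingredient is the scalar boundedness $\|M_{HL}(f)\|_{L^{p(\cdot)}}\ls\|f\|_{L^{p(\cdot)}}$ for $p(\cdot)\in C^{\log}$ with $1<p_-\le p_+<\infty$. Since the log-H\"older condition in $C^{\log}$ is stated precisely with respect to $\rho$ and $(\rn,\rho,dx)$ is a Coifman--Weiss space of homogeneous type, Diening's scheme (split $f=f\chi_{\{|f|\le 1\}}+f\chi_{\{|f|>1\}}$, control the small part pointwise via the log-H\"older decay of $p(\cdot)$, and bound the large part by a constant-exponent $L^{p_-}$-estimate) transfers verbatim from the Euclidean setting. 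Because $p_+<\infty$ forces $p'(\cdot)\in C^{\log}$ with $1<(p')_-\le(p')_+<\infty$, the same bound holds for $M_{HL}$ on $L^{p'(\cdot)}$.

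The second ingredient is a scalar weighted Fefferman--Stein inequality. Fix $p_0\in(1,\infty)$, with $p_0<q$ when $q<\infty$. Bownik's anisotropic Muckenhoupt class $A_{p_0}(\rn;A)$, defined with respect to the basis $\mathfrak B$ of dilated balls, enjoys a full weighted theory for $M_{HL}$: in particular $M_{HL}:L^{p_0}(w)\to L^{p_0}(w)$ for every $w\in A_{p_0}(\rn;A)$. A duality-plus-factorization argument --- interpolating between the trivial $\ell^\infty$ bound provided by $\sup_k M_{HL}(f_k)\le M_{HL}(\sup_k|f_k|)$ and the diagonal $\ell^1$ estimate, or dualizing against the sharp function --- then yields
$$\lf\|\lf(\dsum_{k\in\nn}[M_{HL}(f_k)]^q\r)^{1/q}\r\|_{L^{p_0}(w)}\le C\lf\|\lf(\dsum_{k\in\nn}|f_k|^q\r)^{1/q}\r\|_{L^{p_0}(w)}$$
with $C$ depending only on $q$, $p_0$, and the $A_{p_0}$-constant of $w$.

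The final step is Rubio de Francia extrapolation in its variable-exponent form: whenever a pair of nonnegative measurable functions $(F,G)$ satisfies $\|F\|_{L^{p_0}(w)}\le C\|G\|_{L^{p_0}(w)}$ for every $w\in A_{p_0}(\rn;A)$, one has $\|F\|_{L^{p(\cdot)}}\le C'\|G\|_{L^{p(\cdot)}}$ for every $p(\cdot)\in C^{\log}$ with $1<p_-\le p_+<\infty$. Applying this to $F=(\sum_k[M_{HL}(f_k)]^q)^{1/q}$ and $G=(\sum_k|f_k|^q)^{1/q}$ yields the conclusion when $q<\infty$; the case $q=\infty$ reduces directly to the scalar $L^{p(\cdot)}$-bound via $\sup_k M_{HL}(f_k)\le M_{HL}(\sup_k|f_k|)$. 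The main obstacle is confirming that the extrapolation machinery, originally developed in the Euclidean isotropic setting, survives in the anisotropic framework; this amounts to verifying (a) that Bownik's $A_p(\rn;A)$ theory plays the role of classical $A_p$, (b) that the Rubio de Francia iteration algorithm built from $M_{HL}$ converges in $L^{p(\cdot)}$ (which rests on the scalar boundedness from the first step applied to both $p(\cdot)$ and $p'(\cdot)$), and (c) that the duality $(L^{p(\cdot)})^*\cong L^{p'(\cdot)}$ is compatible with $M_{HL}$. Once these three items are in place, the extrapolation proceeds by the standard dual-pair argument with no additional anisotropic input.
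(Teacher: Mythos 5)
The paper does not actually prove this lemma: it is imported verbatim as \cite[Lemma 4.3]{lyy17} and used as a black box, so there is no internal argument to compare yours against. Your extrapolation route is nevertheless the standard way such vector-valued inequalities are established in the variable-exponent literature (it is the Cruz--Uribe--Fiorenza--Martell--P\'erez scheme transported to Bownik's anisotropic setting), and its three ingredients are all genuinely available: the scalar boundedness of $M_{HL}$ on $L^{p(\cdot)}$ under the $\rho$-adapted log-H\"older condition, the weighted theory of $M_{HL}$ for the Muckenhoupt classes built on the basis $\mathfrak{B}$ of dilated balls (see \cite{blyz10}), and the duality $(L^{p(\cdot)})^{*}\cong L^{p'(\cdot)}$ for $1<p_-\le p_+<\infty$. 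In outline your proof is therefore correct and more informative than the citation the paper offers.

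Two steps deserve tightening. First, your second ingredient --- the weighted inequality $\|(\sum_k[M_{HL}(f_k)]^q)^{1/q}\|_{L^{p_0}(w)}\ls\|(\sum_k|f_k|^q)^{1/q}\|_{L^{p_0}(w)}$ for $w\in A_{p_0}(\rn;A)$ --- is itself a theorem, and the one-line justification you give (``interpolating between the $\ell^\infty$ and $\ell^1$ bounds, or dualizing against the sharp function'') is a gesture rather than a proof; the cleanest route is to observe that for $p_0=q$ the inequality is the scalar weighted bound applied termwise and then extrapolate within the weighted constant-exponent scale, or simply to cite the space-of-homogeneous-type version. Second, the variable-exponent extrapolation theorem has precise hypotheses: in its usual form one assumes the $L^{p_0}(w)$ estimate for all $w$ in the relevant weight class and concludes the $L^{p(\cdot)}$ estimate provided $p_0<p_-$ and $M_{HL}$ is bounded on $L^{(p(\cdot)/p_0)'}$; since the weighted inequality holds for every $p_0\in(1,\infty)$, the correct move is to choose $p_0\in(1,\min\{q,p_-\})$ --- your stated constraint ``$p_0<q$'' is neither necessary nor the one that makes the machinery apply. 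Neither point is a fatal gap, but both are the places where a referee would demand a reference or a computation.
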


The proof of the following Theorem \ref{t2.6} is motivated by \cite[Theorem 4.8]{lyy17x}.
\begin{proof}[Proof of Theorem \ref{t2.6}]
By the definitions of $(p(\cdot),\,q,\,s)$-atom and $(p(\cdot),\,q,\,s,\,\varepsilon)$-molecule,
we notice that a $(p(\cdot),\,\infty,\,s)$-atom is also a $(p(\cdot),\,q,\,s,\,\varepsilon)$-molecule, which implies that
$$H^{p(\cdot),\,\infty, \,s}_{A}\subset H^{p(\cdot),\,q,\,s,\,\varepsilon}_{A}.$$
This, combined with Lemma \ref{l3.1}, further implies that, to prove Theorem \ref{t2.6}, it suffices to show
$H^{p(\cdot),\,q,\,s,\,\varepsilon}_{A}\subset H^{p(\cdot)}_{A}$.

To show this, for any $f\in H^{p(\cdot),\,q,\,s,\,\varepsilon}_{A}$, by Definition \ref{d2.5},
we deduce that there exists a sequence of $(p(\cdot),\,q,\,s,\,\varepsilon)$-molecules,
$\{m_i\}_{i\in\nn}$, associated with dilated balls $\{B^{(i)}\}_{i\in\nn}\subset\mathfrak{B}$,
where $B^{(i)}:=x_i+B_{\ell_i}$ with $x_i\in\rn$ and $\ell_i\in\zz$, such that
 $$f=\sum_{i\in\nn} \lz_im_i\,\,\mathrm{in} \,\,\mathcal{S}^{'}$$
and
\begin{align}\label{e3.3}
\|f\|_{H^{p(\cdot),\,q,\,s,\,\ez}_{A}}
\thicksim \lf\|\lf\{\sum_{i\in\nn} \lf[\frac{|\lambda_i|\chi_{{B^{(i)}}}}{\|\chi_{{B^{(i)}}}\|_{L^{p(\cdot)}}}\r]^{\underline{p}}\r\}^{1/\underline{p}}\r\|
_{L^{p(\cdot)}}.
\end{align}

To prove $f\in H^{p(\cdot)}_{A}$,
it is easy to see that, for all $N\in\mathbb{N}\cap[\lfloor({1/\underline{p}}-1)/\ln\lambda_{-}\rfloor+2,\,\infty)$,
\begin{align*}
\|M_{N}(f)\|_{L^{p(\cdot)}}^{\underline{p}}&=\lf\|M_{N}\lf(\sum_{i\in \nn}\lambda_{i}m_{i}\r)\r\|_{L^{p(\cdot)}}^{\underline{p}}\\
&\leq\lf\|\sum_{i\in \nn}|\lambda_{i}|M_{N}(m_{i})\r\|_{L^{p(\cdot)}}^{\underline{p}}\\
&\leq\lf\|\sum_{i\in \nn}|\lambda_{i}|M_{N}(m_{i})\chi_{A^{\sigma}B^{(i)}}
\r\|_{L^{p(\cdot)}}^{\underline{p}}+\lf\|\sum_{i\in \nn}|\lambda_{i}|M_{N}(m_{i})\chi_{({A^{\sigma}B^{(i)}}
)^{\complement}}\r\|_{L^{p(\cdot)}}^{\underline{p}}\\
&\leq\lf\|\lf\{\sum_{i\in \nn}[|\lambda_{i}|M_{N}(m_{i})\chi_{A^{\sigma}B^{(i)}}]^{\underline{p}}\r\}^{1/\underline{p}}\r\|_{L^{p(\cdot)}}^{\underline{p}}+\lf\|\sum_{i\in \nn}|\lambda_{i}|M_{N}(m_{i})\chi_{({A^{\sigma}B^{(i)}})^{\complement}}\r\|_{L^{p(\cdot)}}^{\underline{p}}\\
&=:\mathrm{I_{1}}+\mathrm{I_{2}},
\end{align*}
where $A^{\sigma}B^{(i)}$ is the $A^{\sigma}$ concentric expanse on $B^{(i)}$, that is, $A^{\sigma}B^{(i)}:=x_i+A^{\sigma}B_{\ell_i}$.

To estimate $\mathrm{I_{1}}$, for any $\widetilde{q}\in((\max\{p_+,\,1\},\,q)$, by
 the boundedness of $M_{N}$ on $L^{r}$ for all $r\in(1,\,\infty)$
and H\"{o}lder's inequality,
we have
\begin{align}\label{e3.9x}
&\lf\|\lf\|\chi_{B^{(i)}}\r\|_{L^{p(\cdot)}} M_{N}(m_i)\chi_{A^{\sigma}{B^{(i)}}}\r\|_{L^{\tilde{q}}}
\\
\leq&\lf\|\chi_{B^{(i)}}\r\|_{L^{p(\cdot)}}\lf\|M_{N}(m_i)\r\|_{L^{\tilde{q}}}\nonumber \\
\lesssim&\lf\|\chi_{B^{(i)}}\r\|_{L^{p(\cdot)}}\lf\|m_i\r\|_{L^{\tilde{q}}}\nonumber \\
\thicksim&\lf\|\chi_{B^{(i)}}\r\|_{L^{p(\cdot)}}\sup_{\|g\|_{L^{{\tilde{q}}'}}=1}\lf|\int_\rn m_i(x)g(x)\,dx\r|\nonumber\\
\lesssim&
\lf\|\chi_{B^{(i)}}\r\|_{L^{p(\cdot)}}\sup_{\|g\|_{L^{{\tilde{q}}'}}=1} \sum_{j\in\zz_+}\int_{U_j(B^{(i)})} \lf|m_i(x)\r||g(x)|\,dx\nonumber\\
\lesssim&
\lf\|\chi_{B^{(i)}}\r\|_{L^{p(\cdot)}}\sup_{\|g\|_{L^{{\tilde{q}}'}}=1}\sum_{j\in\zz_+}\lf[\int_{U_j(B^{(i)})} \lf|m_i(x)\r|^{q}dx\r]^{1/q}
\lf[\int_{U_j(B^{(i)})} |g(x)|^{q'}dx\r]^{1/q'}\nonumber\\
\thicksim&
\lf\|\chi_{B^{(i)}}\r\|_{L^{p(\cdot)}}\sup_{\|g\|_{L^{{\tilde{q}}'}}=1}\sum_{j\in\zz_+}\lf\|m_i\r\|_{L^{q}(U_j(B^{(i)}))}
\lf[\int_{U_j(B^{(i)})} |g(x)|^{q'}dx\r]^{1/q'},\nonumber
\end{align}
where, for any $i\in\nn$, $U_0(B^{(i)}):=B^{(i)}$
and, for any $j\in\mathbb{N}$,
\begin{eqnarray}\label{e3.10}
U_j(B^{(i)}):=x_i+(A^jB_{\ell_i})\backslash (A^{j-1}B_{\ell_i}).
\end{eqnarray}
Moreover, by \eqref{e3.10} and H\"{o}lder's inequality, we know that, for any $\widetilde{q}\in((\max\{p_+,\,1\},\,q)$, $i\in\nn$ and $j\in\mathbb{Z}_+$,
\begin{align*}
\lf[\int_{U_j(B^{(i)})} |g(x)|^{q'}dx\r]^{1/q'}
\leq&\lf|A^jB_{\ell_i}\r|^{1/q'}\lf[\frac{1}{\lf|A^jB_{\ell_i}\r|}\int_{x_i+A^jB_{\ell_i}} |g(x)|^{q'}dx\r]^{1/q'}\\
\hs\leq&\lf|A^jB_{\ell_i}\r|^{1/q'}\inf_{x\in{x_i+B_{\ell_i}}}
\lf[M_{HL}\lf(|g|^{q'}\r)(x)\r]^{1/q'}\\
\hs\leq& \lf|A^jB_{\ell_i}\r|^{1/q'}\lf\{\frac{1}{\lf|B_{\ell_i}\r|}\int_{x_i+B_{\ell_i}} \lf[M_{HL}\lf(|g|^{q'}\r)(x)\r]^{\tilde{q}'/q'}dx\r\}^{1/\tilde{q}'}.
\end{align*}
%Substituting the above inequality into \eqref{e3.9}
Substituting the above inequality into \eqref{e3.9x}, by $\lf\|m_i\r\|_{L^{q}({U_j(B^{(i)})})}\le \frac{b^{-j\varepsilon}|B^{(i)}|^{1/{q}}}{\|\chi_{B^{(i)}}\|_{L^{p(\cdot)}}}$,
 $1/{q'}<1<\varepsilon$
and applying the fact that $M_{HL}$ is bounded on $L^r$ for all $r\in(1,\,\infty)$,
%$\{B^{(i)}\}_{i\in\nn}$ is finitely overlapped,
we conclude that, for any $\widetilde{q}\in((\max\{p_+,\,1\},\,q)$ and $i\in\nn$,
%choosing as above and $k\in\zz\cap(-\infty,\,k_0-1]$,
\begin{align*}
&\lf\|\lf\|\chi_{B^{(i)}}\r\|_{L^{p(\cdot)}} M_{N}(m_i)\chi_{A^{\sigma}{B^{(i)}}}\r\|_{L^{\tilde{q}}}
\nonumber\\
\hs\leq&\lf\|\chi_{B^{(i)}}\r\|_{L^{p(\cdot)}}
\sup_{\|g\|_{L^{\tilde{q}'}}=1}\sum_{j\in\zz_+}b^{j(1/{q'}-\varepsilon)}
\frac{|B^{(i)}|^{1/\tilde{q}}}{\|\chi_{B^{(i)}}\|_{L^{p(\cdot)}}}\\
\hs\hs&\times
\lf\{\int_{x_i+B_{\ell_i}} \lf[M_{HL}\lf(|g|^{q'}\r)(x)\r]^{\tilde{q}'/q'}dx\r\}^{1/\tilde{q}'}\\
\hs\ls&\lf|B^{(i)}\r|^{{1}/{\tilde{q}}}\sup_{\|g\|_{L^{\tilde{q}'}}=1}
\lf\{\int_{x_i+B_{\ell_i}} \lf[M_{HL}\lf(|g|^{q'}\r)(x)\r]^{\tilde{q}'/q'}dx\r\}^{1/\tilde{q}'}\\
\hs\ls&\lf|B^{(i)}\r|^{{1}/{\tilde{q}}}\sup_{\|g\|_{L^{\tilde{q}'}}=1}
\lf[\int_{\rn} |g(x)|^{\tilde{q}'}dx\r]^{1/\tilde{q}'}\\
\hs\sim&\lf|B^{(i)}\r|^{1/{\tilde{q}}},
\end{align*}
which, combined with Lemma \ref{l3.4}, $\widetilde{q}\in((\max\{p_+,\,1\},\,q)$ and \eqref{e3.3}, we obtain
\begin{align*}
\mathrm{I_{1}}
&=\lf\|\lf\{\sum_{i\in\nn}\lf[\frac{|\lambda_i|}{\|\chi_{{B^{(i)}}}\|_{L^{p(\cdot)}}}\lf\|\chi_{B^{(i)}}\r\|_{L^{p(\cdot)}}M_{N}(m_{i})\chi_{A^{\sigma}{B^{(i)}}}\r]^{\underline{p}}\r\}^
{1/\underline{p}}\r\|_{L^{p(\cdot)}}^{\underline{p}}\\
&\lesssim\lf\|\lf\{\sum_{i\in\nn}\lf[\frac{|\lambda_i|}{\|\chi_{{B^{(i)}}}\|_{L^{p(\cdot)}}}\chi_{{B^{(i)}}}\r]^{\underline{p}}\r\}^
{1/\underline{p}}\r\|_{L^{p(\cdot)}}^{\underline{p}}\\
%&\thicksim\lf\|\lf\{\sum_{i\in\nn}\lf[\frac{|\lambda_i|\chi_{{B^{(i)}}}}{\|\chi_{{B^{(i)}}}\|_{L^{p(\cdot)}}}\r]^{\underline{p}}\r\}^
%{1/\underline{p}}\r\|_{L^{p(\cdot)}}
&\thicksim\|f\|_{H^{p(\cdot),\,q,\,s,\,\ez}_{A}}^{\underline{p}}.
\end{align*}

To deal with ${\rm{I_{2}}}$, for any $i\in\nn$ and $x\in(x_i+A^\sigma B_{\ell_i}))^\complement$, by $M_{N}(f)(x)\thicksim M_{N}^0(f)(x)$ \cite[Proposition 3.10]{b03} and proceeding as in the proof of \cite[(3.48)]{lyy16}, we know that
\begin{eqnarray}\label{e3.12}
M_{N}(m_i)(x)\ls\lf\|\chi_{B^{(i)}}\r\|^{-1}_{L^{p(\cdot)}}
\frac{\lf|{B^{(i)}}\r|^\beta}{[\rho(x-x_i)]^\beta}
\ls\lf\|\chi_{B^{(i)}}\r\|^{-1}_{L^{p(\cdot)}}
\lf[M_{HL}(\chi_{B^{(i)}})(x)\r]^\beta,
\end{eqnarray}
where, for any $i\in\nn$, $x_i$ denotes the centre of the dilated ball $B^{(i)}$ and
\begin{eqnarray}\label{e3.12.1}
\beta:=\lf(\frac{\ln b}{\ln \lambda_-}+s+1\r)\frac{\ln \lambda_-}{\ln b}>\frac{1}{\underline{p}}.
\end{eqnarray}
From this, Remark \ref{r2.1}(i), Lemma \ref{l3.5} and \eqref{e3.3}, we deduce that
\begin{align}\label{e3.10x}
\mathrm{I}_{2}&\lesssim\lf\|\sum_{i\in\nn}\frac{|\lambda_i|}{\|\chi_{B^{(i)}}\|_{L^{p(\cdot)}}}[M_{HL}(\chi_{B^{(i)}})]^{\beta}\r\|_{L^{p(\cdot)}}^{\underline{p}}\\
&\thicksim\lf\|\lf\{\sum_{i\in\nn}\frac{|\lambda_i|}{\|\chi_{B^{(i)}}\|_{L^{p(\cdot)}}}[M_{HL}(\chi_{B^{(i)}})]^{\beta}\r\}^{1/\beta}\r\|_{L^{\beta p(\cdot)}}^{{\beta}{\underline{p}}}\nonumber\\
&\lesssim\lf\|\lf\{\sum_{i\in\nn}\frac{|\lambda_i|\chi_{B^{(i)}}}{\|\chi_{B^{(i)}}\|_{L^{p(\cdot)}}}\r\}^{1/\beta}\r\|_{L^{\beta p(\cdot)}}^{{\beta}{\underline{p}}}\nonumber
\thicksim\lf\|\sum_{i\in\nn}\frac{|\lambda_i|\chi_{B^{(i)}}}{\|\chi_{B^{(i)}}\|_{L^{p(\cdot)}}}\r\|_{L^{p(\cdot)}}^{\underline{p}}\nonumber\\
&\lesssim\lf\|\lf\{\sum_{i\in\nn}\lf[\frac{|\lambda_i|\chi_{B^{(i)}}}{\|\chi_{B^{(i)}}\|_{L^{p(\cdot)}}}\r]^{\underline{p}}\r\}^{1/\underline{p}}\r\|_{L^{p(\cdot)}}^{\underline{p}}\nonumber
\thicksim\|f\|_{H^{p(\cdot),\,q,\,s,\,\ez}_{A}}^{\underline{p}}.\nonumber
\end{align}
This, together with $\mathrm{I_1}$ and $\mathrm{I_2}$, shows that
\begin{align*}
\|f\|_{H_{A}^{p(\cdot)}}\thicksim\|M_{N}(f)\|_{L^{p(\cdot)}}\lesssim\|f\|_{H^{p(\cdot),\,q,\,s,\,\ez}_{A}}.
\end{align*}
This implies that $f\in H^{p(\cdot)}_{A}$ and hence
$H^{p(\cdot),\,q,\,s,\,\varepsilon}_{A}\subset H^{p(\cdot)}_{A}$. This finishes the proof of Theorem \ref{t2.6}.
\end{proof}

\section{Applications}\label{s4}
\hskip\parindent

In this section, as an application of the molecular characterization of $H^{p(\cdot)}_{A}$ in
Theorem \ref{t2.6}, we obtain the boundedness of anisotropic Calder\'on-Zygmund
operators from $H^{p(\cdot)}_{A}$ to $L^{p(\cdot)}$ or from $H^{p(\cdot)}_{A}$ to itself.
Particularly, when $A:=2\rm{I_{n\times n}}$, we obtain the boundedness of the Riesz transforms (resp.,
when $n=1$, the Hilbert transform) from $H^{p(\cdot)}_{A}$ to $L^{p(\cdot)}$ or from $H^{p(\cdot)}_{A}$ to itself.

Let us begin with the notion of anisotropic Calder\'on-Zygmund operators
associated with dilation $A$.
\begin{definition}\label{d4.1}
A locally integrable function $K$ on $\Omega:=\{(x,\,y)\in\rn\times\rn:\,x\ne y\}$ is called
an {\it anisotropic Calder\'on-Zygmund kernel}
(with respect to a dilation A and a quasi-norm $\rho$) if there exists a positive constant $C$ such that
$$\int_{(y+B_{l+2\sigma})^{\complement}}\lf|K(x,\,y)-K(x,\,\wz y)\r|\,dx\le C\,\,\,\,\mathrm{whenever}\,\,\, \wz y\in y+B_l, $$
for any $y\in\rn$ and $l\in\zz$.

We call that
$T$ is an {\it anisotropic Calder\'on-Zygmund operator} if $T$ is a continuous linear operator
mapping $\mathcal{S}$ into $\mathcal{S}'$ that extends to a bounded linear
operator on $L^2$ and there exists an anisotropic Calder\'on-Zygmund kernel $K$ such that,
for all $f\in C_c^\infty$ and $x\not\in \supp (f)$,
$$Tf(x):=\int_\rn K(x,\,y)f(y)dy.$$
\end{definition}
\begin{theorem}\label{t4.0}
Let $p\in(1,\,\infty)$ and $T$ is an anisotropic Calder\'on-Zygmund operator. Then $T$ extends to a bounded linear operator on $L^p$.
\end{theorem}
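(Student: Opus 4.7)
The plan is to imitate the classical Calder\'on--Zygmund real-variable method, adapted to the dilation $A$. The starting point is the anisotropic Calder\'on--Zygmund decomposition: for $f\in L^1$ and $\alpha>0$, write $f=g+b$ with $b=\sum_{i}b_i$, where $g$ satisfies $\|g\|_{L^\infty}\lesssim\alpha$ and $\|g\|_{L^1}\le\|f\|_{L^1}$, each $b_i$ is supported in a dilated ball $B^{(i)}=x_i+B_{k_i}$ with $\int b_i=0$ and $\|b_i\|_{L^1}\lesssim\alpha|B^{(i)}|$, and $\sum_i|B^{(i)}|\lesssim\alpha^{-1}\|f\|_{L^1}$. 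Such a decomposition is standard on a space of homogeneous type and, in the expansive-dilation setting, is obtained via the Christ/Bownik dyadic cubes together with the stopping-time construction driven by the anisotropic Hardy--Littlewood maximal operator $M_{HL}$ of \eqref{e2.9}.

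Next I would establish a weak-type $(1,1)$ estimate for $T$. Splitting
\begin{align*}
\lf|\{x\in\rn:|Tf(x)|>\alpha\}\r|\le\lf|\{|Tg|>\alpha/2\}\r|+\lf|\{|Tb|>\alpha/2\}\r|,
\end{align*}
the good part is handled by Chebyshev's inequality and the $L^2$-boundedness of $T$: since $\|g\|_{L^2}^2\lesssim\alpha\|g\|_{L^1}$, one gets $|\{|Tg|>\alpha/2\}|\lesssim\alpha^{-2}\|g\|_{L^2}^2\lesssim\alpha^{-1}\|f\|_{L^1}$. For the bad part, I would introduce the enlarged set $\Omega:=\bigcup_i(x_i+B_{k_i+2\sigma})$, whose measure is $\lesssim\alpha^{-1}\|f\|_{L^1}$; it therefore suffices to estimate $\int_{\Omega^\complement}|Tb|\,dx$. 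Using $\int b_i=0$, write
\begin{align*}
|Tb_i(x)|\le\int_{B^{(i)}}|K(x,y)-K(x,x_i)|\,|b_i(y)|\,dy,
\end{align*}
apply Fubini, and invoke the Calder\'on--Zygmund kernel condition: for each fixed $y\in B^{(i)}=x_i+B_{k_i}$ the symmetry $B_{k_i}=-B_{k_i}$ gives $x_i\in y+B_{k_i}$, and $x\in\Omega^\complement$ combined with \eqref{e2.3} forces $x\in(y+B_{k_i+2\sigma})^\complement$; the kernel condition then yields $\int_{\Omega^\complement}|Tb_i(x)|\,dx\lesssim\|b_i\|_{L^1}$. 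Summing in $i$ produces $\int_{\Omega^\complement}|Tb|\,dx\lesssim\|f\|_{L^1}$, and Chebyshev completes the weak-type $(1,1)$ bound $|\{|Tf|>\alpha\}|\lesssim\alpha^{-1}\|f\|_{L^1}$.

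Combining the $L^2$-boundedness with the weak $(1,1)$ bound, the Marcinkiewicz interpolation theorem delivers the strong $(p,p)$ bound for every $p\in(1,2]$. For $p\in(2,\infty)$ I would argue by duality: the formal adjoint $T^*$ is an anisotropic Calder\'on--Zygmund operator with kernel $K^*(x,y):=K(y,x)$ satisfying the analogous H\"ormander-type estimate in the opposite variable, so the above reasoning applied to $T^*$ gives $\|T^*g\|_{L^{p'}}\lesssim\|g\|_{L^{p'}}$ for the conjugate exponent $p'\in(1,2)$, and duality then yields $\|Tf\|_{L^p}\lesssim\|f\|_{L^p}$.

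The main technical obstacle is executing the bad-part estimate cleanly: one must match the geometry of the enlargement $B_{k_i+2\sigma}$ appearing in the Calder\'on--Zygmund decomposition exactly with the region $(y+B_{l+2\sigma})^\complement$ on which the kernel condition is available. The containment relations \eqref{e2.3}--\eqref{e2.4} for sums and complements of the dilated balls $B_k$, together with the symmetry $B_k=-B_k$ inherited from the ellipsoid $\Delta$, are exactly what makes this match-up possible in the anisotropic framework, and they replace the role played by doubled cubes in the classical dyadic argument.
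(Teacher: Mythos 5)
Your proof is correct and follows essentially the same path as the paper's: the paper obtains the range $p\in(1,2]$ by citing the Corollary on p.~22 of Stein \cite{s93}, which encapsulates exactly the anisotropic Calder\'on--Zygmund decomposition, the weak-$(1,1)$ estimate for the bad part via the H\"ormander condition of Definition \ref{d4.1}, and Marcinkiewicz interpolation that you carry out by hand, and it then passes to $p\in[2,\infty)$ by duality just as you do. The one point worth flagging is a leap you share with the paper: the duality step requires the H\"ormander condition for $K$ in its \emph{first} variable (so that $T^{*}$, with kernel $K^{*}(x,y)=K(y,x)$, is itself weak $(1,1)$), which Definition \ref{d4.1} does not literally supply, so your assertion that $K^{*}$ ``satisfies the analogous H\"ormander-type estimate in the opposite variable'' is an unproved (though customary) extra hypothesis --- precisely the same implicit assumption hidden in the paper's phrase ``by taking duals.''
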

\begin{proof}
By \cite[p.\,22, Corrollary]{s93}, we see that anisotropic Calder\'on-Zygmund operator $T$ can be extended to a bounded linear operator on
 $L^p$ with $p\in(1,\,2]$.
%From the Marcinkiewicz Interpolation Theorem we conclude that they are boundedness from $L^p$ to $L^p$ for $p\in(0,\,1]$.
By taking duals we obtain that $T$ is also bounded from $L^p$ to $L^p$ for $p\in[2,\,\infty)$. From the above, we conclude that anisotropic Calder\'on-Zygmund operator $T$ is bounded on $L^p$ with $p\in(1,\,\infty)$.
\end{proof}
To obtain the boundedness of anisotropic Calder\'on-Zygmund
operators on $H^{p(\cdot)}_{A}$, we need to increase the smooth hypothesis
on the kernel $K$. The following definition was introduced by Bownik in \cite[Definition 9.2]{b03}.

\begin{definition}\label{d4.3}
Let $N\in\zz_+$. We say that $T$ is an {\it anisotropic Calder\'on-Zygmund operator of order $N$} if $T$ satisfies Definition \ref{d4.1} with the kernel $K$ in the class $C^{N}$ as a function of $y$.
We also require that there exists a positive constant $C$ such that, for any $\alpha\in\zz^n_+$, with $|\alpha|\le N$, and $(x,y)\in\Omega$,
\begin{eqnarray}\label{e4.1}
\lf|\partial^\alpha_y \lf[K\lf(\cdot,\,A^\ell \cdot\r)\r]\lf(x,\,A^{-\ell}y\r)\r|\le C[\rho(x-y)]^{-1}=Cb^{-\ell},
\end{eqnarray}
where $\ell\in\zz$ is the unique integer such that $\rho(x-y)=b^\ell$ with
the implicit equivalent positive constants independent of $x,\,y$ and $\ell$.
More formally,
$$\partial^\alpha_y \lf[K\lf(\cdot,\,A^\ell \cdot\r)\r]\lf(x,\,A^{-\ell}y\r)$$
means
$(\partial^\alpha_y \tilde{K})(x,\,A^{-\ell}y),$ where $\tilde{K}(x,\,y):=K(x,\,A^\ell y)$ for all $(x,\,y)\in\rn$ and $x\ne A^\ell y$.
\end{definition}

\begin{remark}
In Definition \ref{d4.3}, when $N\in\zz_+$,  $A:=2{\mathrm I_{n\times n}}$
and $\rho(x):=|x|^n$ for all $x\in\rn$, then \eqref{e4.1} becomes that,
for any $\alpha\in\zz^n_+$, with $|\alpha|\le N$, and $(x,y)\in\Omega$,
\begin{eqnarray}\label{e4.2}
|\partial^\az K(x,\,y)|\le C|x-y|^{-n-|\az|},
\end{eqnarray}
which is standard and well known. More examples of anisotropic Calder\'on-Zygmund operator of order $N$ as in Definition \ref{d4.1}; see \cite[p.\,61]{b03}.
\end{remark}

To obtain the boundedness of anisotropic Calder\'on-Zygmund
operators from $H^{p(\cdot)}_{A}$ to $H^{p(\cdot)}_{A}$,
we need to prove that anisotropic Calder¡äon-Zygmund operators T map atoms into harmless constant multiples of molecules. Generally, we cannot expect this unless we also assume that the considered operators preserve vanishing moments, which is given in the following definition introduced by Bownik (\cite[Definition 9.4]{b03}).
\begin{definition}
We say that an anisotropic Calder\'on-Zygmund operator of order $N$ satisfies $T^{*}(x^{\gamma})=0$ for all $|\gamma|\leq s$, where $s<N\ln \lambda_-/\ln \lambda_+$, if for any $f\in L^q$ with compact support, $q\in[1,\,\infty]\cap(p_+,\,\infty]$ with $p_+$ as in \eqref{e2.5},
 and
$$\int_{\rn}x^{\alpha}f(x)dx=0\,\,\mathrm{for}\,\,\mathrm{all}\,\,|\alpha|< N,$$
we also have
$$\int_{\rn}x^{\gamma}Tf(x)dx=0\,\,\mathrm{for}\,\,\mathrm{all}\,\,|\gamma|\leq s.$$
\end{definition}

Notice that Definition 4.5 coincides with the analogous property in the isotropic
setting investigated by Meyer in [36, Chapter 7.4]. Furthermore, the condition $T^\ast(x^\az)=0$ is automatically satisfied when $T$ is a Calder\"on-Zygmund operator with convolutional kernel (see [17, Chapter III.7]).

The main results of this section are the following two theorems.

\begin{theorem}\label{t4.1}
Let $p(\cdot)\in C^{\log}$ and $\beta:=\lf(\frac{\ln b}{\ln \lambda_-}+s+1\r)\frac{\ln \lambda_-}{\ln b}$, where $s:=\lfloor(1/{p_-}-1) {\ln b/\ln \lambda_-}\rfloor$. If $N\in\nn$ satisfies $N\geq(\beta-1)\ln b/\ln{\lz_-}$ and $T$ is an anisotropic Calder\'on-Zygmund
operator of order $N$, then $T$ can be extended to a bounded linear operator from $H^{p(\cdot)}_{A}$ to $L^{p(\cdot)}$. Moreover, there exists a positive constant $C$ such that, for all
$f\in H^{p(\cdot)}_{A}$,
\begin{eqnarray}\label{e4.3}
\|T(f)\|_{L^{p(\cdot)}}\leq C\, \|f\|_{H^{p(\cdot)}_{A}}.
\end{eqnarray}
\end{theorem}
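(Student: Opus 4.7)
The plan is to follow the template of the proof of Theorem \ref{t2.6}: first decompose $f$ via atoms by Lemma \ref{l3.1}, then split $Tf$ into a local part (over $A^\sigma B^{(i)}$) and a far-away part (over its complement) relative to each atom, and bound each separately by the same machinery (Lemma \ref{l3.4} and Lemma \ref{l3.5}) used there.

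More precisely, I would pick $q\in(\max\{p_+,1\},\infty)$ finite so that $T$ is bounded on $L^q$ by Theorem \ref{t4.0}, and take $s=\lfloor(1/p_--1)\ln b/\ln\lz_-\rfloor$ as in the statement. For any $f\in H^{p(\cdot)}_A$, Lemma \ref{l3.1} furnishes an atomic decomposition $f=\sum_{i\in\nn}\lz_i a_i$ in $\cs'$ with each $a_i$ a $(p(\cdot),q,s)$-atom supported in $B^{(i)}:=x_i+B_{\ell_i}\in\mathfrak{B}$, satisfying
$$\lf\|\lf\{\sum_{i\in\nn}\lf[\frac{|\lz_i|\chi_{B^{(i)}}}{\|\chi_{B^{(i)}}\|_{L^{p(\cdot)}}}\r]^{\underline p}\r\}^{1/\underline p}\r\|_{L^{p(\cdot)}}\ls \|f\|_{H^{p(\cdot)}_A}.$$
Using the $\underline p$-subadditivity from Remark \ref{r2.1}(i) and $\chi_\rn=\chi_{A^\sigma B^{(i)}}+\chi_{(A^\sigma B^{(i)})^\complement}$, I would estimate $\|T(f)\|_{L^{p(\cdot)}}^{\underline p}\le \mathrm{J}_1+\mathrm{J}_2$, where $\mathrm{J}_1$ and $\mathrm{J}_2$ denote the $L^{p(\cdot)}$-quasinorm (to the power $\underline p$) of $\sum_i|\lz_i|\,|Ta_i|\chi_{A^\sigma B^{(i)}}$ and $\sum_i|\lz_i|\,|Ta_i|\chi_{(A^\sigma B^{(i)})^\complement}$ respectively.

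For $\mathrm{J}_1$ (the local part), the $L^q$-boundedness of $T$ together with the atom size bound gives $\|Ta_i\|_{L^q}\ls |B^{(i)}|^{1/q}/\|\chi_{B^{(i)}}\|_{L^{p(\cdot)}}$; invoking also $\|\chi_{A^\sigma B^{(i)}}\|_{L^{p(\cdot)}}\sim\|\chi_{B^{(i)}}\|_{L^{p(\cdot)}}$ (a consequence of log-H\"older continuity of $p(\cdot)$), I would normalise $Ta_i\chi_{A^\sigma B^{(i)}}$ to satisfy the hypothesis of Lemma \ref{l3.4} with the enlarged dilated ball $A^\sigma B^{(i)}$ playing the role of $B^{(k)}$, and conclude $\mathrm{J}_1\ls \|f\|_{H^{p(\cdot)}_A}^{\underline p}$. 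For $\mathrm{J}_2$ (the far part), the key step is the pointwise estimate
$$|Ta_i(x)|\ls \lf\|\chi_{B^{(i)}}\r\|_{L^{p(\cdot)}}^{-1}\frac{|B^{(i)}|^\beta}{[\rho(x-x_i)]^\beta}\ls \lf\|\chi_{B^{(i)}}\r\|_{L^{p(\cdot)}}^{-1}[M_{HL}(\chi_{B^{(i)}})(x)]^\beta$$
for $x\in(A^\sigma B^{(i)})^\complement$, which mirrors the molecule estimate \eqref{e3.12} used in the proof of Theorem \ref{t2.6}. I would derive it by exploiting the vanishing moments of $a_i$ of order $s$ to replace $K(x,y)$ by $K(x,y)$ minus its order-$s$ anisotropic Taylor polynomial expanded in the $y$-variable at $y=x_i$, and then by invoking the kernel derivative bound \eqref{e4.1} after Bownik's change of variables $A^\ell\cdot$ to convert partials and the distance $\rho(x-x_i)\sim b^\ell$ into the claimed polynomial decay. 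Since $(\beta-1)\ln b/\ln\lz_-=s+1$, the hypothesis $N\ge (\beta-1)\ln b/\ln\lz_-$ is exactly what guarantees that enough derivatives of $K$ are under control for this Taylor expansion. Once this pointwise bound is in hand, proceeding exactly as in the estimate \eqref{e3.10x} via Remark \ref{r2.1}(i), the Fefferman-Stein inequality (Lemma \ref{l3.5}) on $L^{\beta p(\cdot)}$ (admissible because $\beta\underline p>1$ by \eqref{e3.12.1}) and finally Lemma \ref{l3.4}, yields $\mathrm{J}_2\ls \|f\|_{H^{p(\cdot)}_A}^{\underline p}$, whence \eqref{e4.3}.

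The main obstacle will be the far-away pointwise estimate: one must carefully transport the anisotropic kernel regularity \eqref{e4.1}, written in the $A^\ell$-dilated coordinates, through a Taylor expansion of order $s$ and compare the outcome with the step homogeneous quasi-norm $\rho$. The thresholds $N\ge s+1$ and $\beta>1/\underline p$ in the hypothesis are calibrated exactly to make this computation close; everything after it is a repackaging of the tools already assembled in Section \ref{s3}. A brief routine verification that $\sum_i\lz_i\,Ta_i$ converges to $Tf$ in $\cs'$ (via the $\cs'$-convergence of the atomic decomposition and continuity of $T$) then completes the argument.
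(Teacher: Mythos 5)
Your core estimates track the paper's proof closely: the same splitting of $Tf$ into a local part, handled by the $L^q$-boundedness of $T$ (Theorem \ref{t4.0}) and a vector-valued lemma (you cite Lemma \ref{l3.4}; the paper uses the equivalent Lemma \ref{l4.1x}), and a far part handled by the pointwise bound $|Ta_i(x)|\ls\|\chi_{B^{(i)}}\|^{-1}_{L^{p(\cdot)}}[M_{HL}(\chi_{B^{(i)}})(x)]^{\beta}$ followed by the argument of \eqref{e3.10x}. The paper works with dyadic cubes and the expansion factor $A^{\omega}$, $\omega:=u+v+2\sigma$, rather than $A^{\sigma}$, but that is cosmetic. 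Those parts of your plan are sound.

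The genuine gap is in how you define $Tf$ and justify $Tf=\sum_{i}\lz_i Ta_i$ for a general $f\in H^{p(\cdot)}_{A}$. You apply Lemma \ref{l3.1} to an arbitrary $f$, obtaining an atomic series that converges only in $\cs'$, and then propose to pass $T$ through the sum ``via the $\cs'$-convergence of the atomic decomposition and continuity of $T$.'' But $T$ is only assumed to be a continuous map from $\cs$ to $\cs'$ that extends boundedly to $L^2$ (hence to $L^q$); it is not continuous on $\cs'$, so term-by-term application of $T$ to an $\cs'$-convergent series is not justified, and indeed $Tf$ is not even defined a priori for $f\in H^{p(\cdot)}_{A}\setminus L^2$ --- the whole content of the theorem is that such an extension exists. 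The paper handles this with a two-stage argument that your proposal omits: it first proves \eqref{e4.3} for $f\in H^{p(\cdot)}_{A}\cap L^r$ with $r\in(1,\infty]\cap(p_+,\infty]$, using Lemma \ref{l4.1yx}, whose atomic decomposition converges almost everywhere and in a space where $T$ acts boundedly; it then invokes the density of $H^{p(\cdot)}_{A}\cap L^r$ in $H^{p(\cdot)}_{A}$ (Lemma \ref{l4.2x}) and the completeness of $L^{p(\cdot)}$ (Lemma \ref{l4.1xx}) to define $T(f):=\lim_{j\to\infty}T(f_j)$ in $L^{p(\cdot)}$ for an approximating sequence $f_j\to f$, and checks that this limit is independent of the choice of $\{f_j\}_{j}$. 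You need to supply this density-and-completeness step; the final ``routine verification'' as you state it does not go through.
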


\begin{theorem}\label{t4.2}
Let $p(\cdot)\in C^{\log}$. If $N\in\nn$ and $T$ is an anisotropic Calder\'on-Zygmund
operator of order $N$, then $T$ can be extended to a bounded linear operator from $H^{p(\cdot)}_{A}$ to $H^{p(\cdot)}_{A}$, provided that $T^*(x^\alpha)=0$ for all $\alpha\in\zz^n_+$ with $|\alpha|\le s$, where $s\in[\lfloor(1/{p_-}-1) {\ln b/\ln \lambda_-}\rfloor,\,\infty)\cap\zz_+$ with $p_-$ as in \eqref{e2.5} and $s<N\ln \lambda_-/\ln \lambda_+$. Moreover, there exists a positive constant $C$ such that, for all
$f\in H^{p(\cdot)}_{A}$,
\begin{eqnarray}\label{e4.4x}
\|T(f)\|_{H^{p(\cdot)}_{A}}\leq C\, \|f\|_{H^{p(\cdot)}_{A}}.
\end{eqnarray}
\end{theorem}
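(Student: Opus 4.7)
\textbf{Proof plan for Theorem \ref{t4.2}.}
The strategy is to pivot between the atomic characterization (Lemma \ref{l3.1}) and the molecular characterization (Theorem \ref{t2.6}): I will show that, for some fixed $\varepsilon>\max\{1,(s+1)\log_b\lambda_+\}$ and some absolute constant $C_0>0$, the operator $T$ sends every $(p(\cdot),q,s)$-atom $a$ to $C_0$ times a $(p(\cdot),q,s,\varepsilon)$-molecule associated with the same dilated ball. Granted this, fixing $q\in(\max\{p_+,1\},\infty]$ and using Lemma \ref{l3.1} to expand $f\in H^{p(\cdot)}_A$ as $f=\sum_{i\in\nn}\lambda_i a_i$ in $\cs'$ with atoms $a_i$ supported in $B^{(i)}$, the continuity of $T:\cs\to\cs'$ transfers the $\cs'$-convergence to give $Tf=\sum_i\lambda_iT(a_i)$ in $\cs'$, and this is (up to $C_0$) a molecular decomposition of $Tf$. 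Applying Theorem \ref{t2.6} to the right-hand side then produces
$$\|Tf\|_{H^{p(\cdot)}_A}\sim\|Tf\|_{H^{p(\cdot),\,q,\,s,\,\varepsilon}_A}\lesssim\lf\|\lf\{\sum_{i\in\nn}\lf[\frac{|\lambda_i|\chi_{B^{(i)}}}{\|\chi_{B^{(i)}}\|_{L^{p(\cdot)}}}\r]^{\underline{p}}\r\}^{1/\underline{p}}\r\|_{L^{p(\cdot)}}\sim\|f\|_{H^{p(\cdot)}_A},$$
which is exactly (\ref{e4.4x}).

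To realize the atom-to-molecule step for an atom $a$ supported in $B=x_B+B_{\ell_B}$, the vanishing moments $\int_\rn x^\alpha T(a)(x)\,dx=0$ for $|\alpha|\le s$ follow directly from the hypothesis $T^*(x^\alpha)=0$, since $a\in L^q$ has compact support and $\int_\rn x^\alpha a(x)\,dx=0$ for all $|\alpha|\le s<N$. For the size condition $\|T(a)\|_{L^q(U_j(B))}\lesssim b^{-j\varepsilon}|B|^{1/q}/\|\chi_B\|_{L^{p(\cdot)}}$ I split on $j$. When $j\le\sigma+1$, the $L^q$-boundedness of $T$ provided by Theorem \ref{t4.0} together with the atomic size estimate $\|a\|_{L^q}\le|B|^{1/q}/\|\chi_B\|_{L^{p(\cdot)}}$ gives the bound directly, absorbing the bounded factor $b^{-j\varepsilon}\sim 1$ into $C_0$. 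When $j\ge\sigma+2$, a point $x\in U_j(B)$ is anisotropically far from $\supp a$, so I invoke cancellation by writing
$$T(a)(x)=\int_B\lf[K(x,y)-P^s_{x_B}K(x,\cdot)(y)\r]a(y)\,dy,$$
where $P^s_{x_B}K(x,\cdot)$ is the order-$s$ Taylor polynomial of $K(x,\cdot)$ at $x_B$ (legitimate because $a$ has vanishing moments up to order $s$). Unpacking (\ref{e4.1}) via the chain rule produces a pointwise estimate on $\partial_y^\alpha K(x,y)$ for $|\alpha|\le N$ and $\rho(x-y)\sim b^{\ell_B+j}$, and an anisotropic Taylor-remainder estimate then yields pointwise decay for $T(a)(x)$ on $U_j(B)$; taking the $L^q$-norm there converts the pointwise decay into the required $b^{-j\varepsilon}$.

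The main obstacle is this second regime. The kernel regularity (\ref{e4.1}) is phrased in terms of differentiating $K(x,A^\ell\cdot)$ rather than $K(x,\cdot)$, and the Taylor remainder must be executed in the anisotropic sense adapted to the quasi-norm $\rho$ instead of the Euclidean norm, so the exact form of the derivative-and-remainder bookkeeping is delicate. It must be done carefully enough that the exponent $\varepsilon$ produced by the decay strictly exceeds $\max\{1,(s+1)\log_b\lambda_+\}$, which is precisely where the hypothesis $s<N\ln\lambda_-/\ln\lambda_+$ is indispensable: it simultaneously guarantees that an order-$s$ Taylor expansion of $K(x,\cdot)$ lies within the $N$ available derivatives and that the anisotropic ratio $\ln\lambda_-/\ln\lambda_+$ leaves enough slack for $\varepsilon$ to meet the molecular threshold of Theorem \ref{t2.6}. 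Once this molecular bound is secured, the passage from pointwise molecule estimates to the variable-exponent $L^{p(\cdot)}$-level control is entirely automatic from Theorem \ref{t2.6}.
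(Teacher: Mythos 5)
Your overall route --- decompose $f$ into atoms, show $T$ maps each $(p(\cdot),q,s)$-atom to a fixed constant multiple of a $(p(\cdot),q,s,\varepsilon)$-molecule on the same dilated ball, and then invoke the molecular characterization of Theorem \ref{t2.6} --- is exactly the paper's strategy (the atom-to-molecule step is precisely Lemma \ref{l4.3}, with $\varepsilon=N\ln\lambda_-/\ln b+1/q'$, and the paper's terms $\mathrm{K_1}$, $\mathrm{K_2}$ are just the inline version of applying Theorem \ref{t2.6} to the resulting molecular sum). However, there is one genuine gap in your argument: the step ``the continuity of $T:\cs\to\cs'$ transfers the $\cs'$-convergence to give $Tf=\sum_{i\in\nn}\lambda_iT(a_i)$ in $\cs'$'' does not work. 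Continuity of $T$ from $\cs$ into $\cs'$ says nothing about applying $T$ to a tempered distribution $f$, nor about commuting $T$ with a series that converges only in $\cs'$; moreover the atoms $a_i$ are merely compactly supported $L^q$ functions, not Schwartz functions, so even $T(a_i)$ is only defined through the $L^q$-extension of Theorem \ref{t4.0}. For a general $f\in H^{p(\cdot)}_A$ the quantity $Tf$ has no a priori meaning, which is why the theorem speaks of \emph{extending} $T$.

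The paper closes this gap in two stages that your proposal omits. First, it proves the bound only for $f\in H^{p(\cdot)}_A\cap L^r$ with $r\in(1,\infty]\cap(p_+,\infty]$, using Lemma \ref{l4.1yx}, whose atomic series converges not just in $\cs'$ but almost everywhere (and, for such $f$, in $L^r$), so that the $L^r$-boundedness of $T$ legitimizes $T(f)=\sum_i\lambda_iT(a_i)$ and hence $M_N(T(f))\le\sum_i|\lambda_i|M_N(T(a_i))$ pointwise. Second, it extends to all of $H^{p(\cdot)}_A$ by density (Lemma \ref{l4.2x}) together with the completeness of the target space --- for Theorem \ref{t4.2} this requires the completeness of $H^{p(\cdot)}_A$ itself, which the paper must establish separately (Lemma \ref{l4.2xx}) --- and a check that the extension is independent of the approximating sequence. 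Without these two stages your argument only defines and bounds $T$ on finite atomic sums. A secondary, smaller point: you should verify, rather than assert, that the $\varepsilon$ produced by the kernel estimate actually exceeds the threshold $\max\{1,(s+1)\log_b\lambda_+\}$ required by Theorem \ref{t2.6}; this is where the hypotheses $s<N\ln\lambda_-/\ln\lambda_+$ and the choice of $q$ enter quantitatively, and the paper delegates this verification to Lemma \ref{l4.3}.
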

\begin{remark}
\begin{enumerate}
\item[(i)]
When the exponent function $p(\cdot)$ is reduced to the constant exponent $p$, i.e., $p(\cdot):=p\in(0,\,1]$, we have $p_-=p$ and the space
$H^{p(\cdot)}_{A}$ is reduced to the anisotropic Hardy space $H_A^p$ and now
Theorems \ref{t4.1} and \ref{t4.2} coincide with \cite[Theorem 9.9, Theorem 9.8]{b03} of Bownik, respectively.
\item[(ii)]When $A:=2{\rm I}_{n\times n}$ and $T$ is  a Calder¨®n-Zygmund operator
of convolution type, Theorems \ref{t4.1} and \ref{t4.2} coincide with \cite[Proposition 5.3, Theorem 5.5]{ns12} of Nakai and Sawano, respectively.
\item[(iii)]When $A:=2{\rm I}_{n\times n}$ and $\rho(x):=|x|^n$ for all $x\in\rn$,
as applications of
Theorems \ref{t4.1} and \ref{t4.2}, we obtain that the Hilbert transform and the Riesz transforms are bounded from $H^{p(\cdot)}_{A}$ to $L^{p(\cdot)}$ and from $H^{p(\cdot)}_{A}$ to itself, whose proof is similar to that of \cite[Theorem 4.7]{lql18}.
\end{enumerate}
\end{remark}

We now turn to the proofs of Theorems \ref{t4.1} and \ref{t4.2}. To this end, let us begin with some lemmas.
The following Lemma \ref{l4.1yx}, Lemma \ref{l4.1x}, Lemma \ref{l4.2x}, Lemma \ref{l4.2} and Lemma \ref{l4.1xx}, respectively, come from
 \rm{\cite[Lemma 5.2, Lemma 4.6, Lemma 4.7]{lyy17x}}, \rm{\cite[Lemma 2.3]{blyz10}} and \rm{\cite[Lemma 2.71]{cf13}}.

\begin{lemma}\label{l4.1yx}
 Let $p(\cdot)\in C^{\log}$, $q \in (1,\,\infty]$ and $s$ be as in \eqref{e2.10}. Then, for any $f\in H^{p(\cdot)}_{A}\cap L^q$, there exist $\{\lambda_i\}_{{i\in\nn}}\subset\mathbb{C}$, dilated balls $\{x_i+B_{\ell_i}\}_{i\in\nn}\subset\mathfrak{B}$ and
 $(p(\cdot),\,\infty,\,s)-$atoms $\{a_i\}_{{i\in\nn}}$ such that
 $$f=\sum_{i\in\nn} \lz_{i}a_i,$$
 where the series converge both almost everywhere and in $\cs'$,
 $$\mathrm{supp}\, a_i\subset x_i + B_{\ell_i}$$
and
$$\lf\|\lf\{\sum_{i\in\nn} \lf[\frac{|\lambda_i|\chi_{x_i+ B_{{\ell_i}}}}{\lf\|\chi_{x_i+ B_{{\ell_i}}}\r\|_{L^{p(\cdot)}}}\r]^{\underline{p}}\r\}^{1/\underline{p}}\r\|
_{L^{p(\cdot)}}\lesssim\|f\|_{H^{p(\cdot)}_{A}}$$
with $\underline{p}$ as in \eqref{e2.5.1}.
\end{lemma}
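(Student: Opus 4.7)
The plan is to adapt Bownik's anisotropic Calder\'on--Zygmund decomposition (see \cite[Section 4]{b03}) to the present variable exponent setting, following the strategy of \cite[Theorem 4.8]{lyy17x} while tracking the pointwise almost everywhere convergence afforded by the extra assumption $f\in L^q$. Since $f\in H^{p(\cdot)}_A$ gives $M_N(f)\in L^{p(\cdot)}$ and hence $M_N(f)<\fz$ almost everywhere, I would for each $k\in\zz$ form the open level set $\Omega_k:=\{x\in\rn:M_N(f)(x)>2^k\}$ and perform a Whitney-type covering of $\Omega_k$ by dilated balls $\{x^k_i+B_{\ell^k_i}\}_{i\in\nn}\subset\mathfrak{B}$ with controlled overlap. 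Using a smooth partition of unity subordinate to a mild dilate of this covering together with projection onto polynomials of degree at most $s$, I would produce the Calder\'on--Zygmund decomposition $f=g^k+b^k$ with $b^k=\sum_i b^k_i$, where each $b^k_i$ is supported in a fixed dilate of $x^k_i+B_{\ell^k_i}$, has vanishing moments up to order $s$, and the good part satisfies $\|g^k\|_{L^\fz}\ls 2^k$ and $g^k=f$ almost everywhere on $\Omega_k^\complement$.

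Next, I would telescope $f=\sum_{k\in\zz}(g^{k+1}-g^k)$ and rewrite each difference as a sum of scalar multiples $\lambda^k_i a^k_i$ of $(p(\cdot),\fz,s)$-atoms supported in the Whitney balls at level $k$, with the normalization chosen so that $\|a^k_i\|_{L^\fz}\le\|\chi_{B^{(k,i)}}\|_{L^{p(\cdot)}}^{-1}$, the vanishing moment conditions of Definition \ref{d3.1} hold, and the coefficients obey $|\lambda^k_i|\ls 2^k\|\chi_{B^{(k,i)}}\|_{L^{p(\cdot)}}$. Re-indexing this doubly indexed family as $\{\lambda_i a_i\}_{i\in\nn}$ then produces the desired atomic series.

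Convergence will be established in two modes. The $\cs'$ convergence is classical: $\|g^k\|_{L^\fz}\ls 2^k$ forces $g^k\to 0$ in $\cs'$ as $k\to-\fz$; in the other direction, $f\in L^q$ together with $|\Omega_k|\to 0$ as $k\to\fz$ (a consequence of $M_N(f)\in L^{p(\cdot)}$) gives $g^k\to f$ in $\cs'$. For the almost everywhere convergence, note that $M_N(f)(x)<\fz$ at a.e.\ $x$, so for a.e.\ $x$ there exists $K(x)\in\zz$ with $x\notin\Omega_k$ whenever $k\ge K(x)$; by the construction $g^k(x)=f(x)$ for all such $k$, and since $\|g^k\|_{L^\fz}\ls 2^k\to0$ as $k\to-\fz$, the partial sums $\sum_{k=-M}^{N}(g^{k+1}-g^k)=g^{N+1}-g^{-M}$ will converge pointwise almost everywhere to $f$.

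For the norm estimate, combining $|\lambda^k_i|\ls 2^k\|\chi_{B^{(k,i)}}\|_{L^{p(\cdot)}}$ with the bounded overlap of the Whitney balls and $B^{(k,i)}\subset\Omega_k$ should yield the pointwise bound
\begin{align*}
\sum_{k\in\zz}\sum_{i\in\nn}\lf[\frac{|\lambda^k_i|\chi_{B^{(k,i)}}}{\|\chi_{B^{(k,i)}}\|_{L^{p(\cdot)}}}\r]^{\underline{p}}\ls\sum_{k\in\zz}2^{k\underline{p}}\chi_{\Omega_k}\ls [M_N(f)]^{\underline{p}}.
\end{align*}
Taking the $L^{p(\cdot)/\underline{p}}$ quasi-norm and applying Remark \ref{r2.1}(i) together with the equivalence $\|M_N(f)\|_{L^{p(\cdot)}}\sim\|f\|_{H^{p(\cdot)}_A}$ will complete the estimate. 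The main technical obstacle will be the simultaneous control of the almost everywhere convergence and the $L^\fz$ boundedness of the atoms: the $\cs'$ convergence uses only tail estimates on $\Omega_k$, whereas the pointwise assertion relies crucially on the extra hypothesis $f\in L^q$ to justify $g^k=f$ almost everywhere on $\Omega_k^\complement$ via a Lebesgue differentiation-type argument within the Whitney construction.
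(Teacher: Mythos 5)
The paper does not prove this lemma at all: it is imported verbatim as \cite[Lemma 5.2]{lyy17x}, whose proof is exactly the Calder\'on--Zygmund/Whitney decomposition of the level sets of $M_N(f)$ that you outline (following Bownik's construction, with the extra hypothesis $f\in L^q$ used precisely as you say to upgrade the distributional convergence to almost everywhere convergence). Your plan is the correct and standard route, matching the cited source; the only caveat is that it is a plan rather than a proof, since the substantive technical work --- the partition of unity, the polynomial projections giving the vanishing moments, the $L^\infty$ bounds $\|g^k\|_{L^\infty}\lesssim 2^k$, and the bounded overlap of the Whitney balls --- is deferred to ``adapting Bownik.''
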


\begin{lemma}\label{l4.2}
Let $A$ be a dilation. Then there exists a collection
$$\mathcal{Q}:=\lf\{Q^k_\alpha\subset\rn:k\in\zz,\,\alpha\in I_k\r\}$$
of open subsets, where $I_k$ is certain index set, such that
\begin{enumerate}
\item[\rm{(i)}] for any $k\in\zz$, $|\rn \backslash \cup_\alpha Q^k_\alpha|=0$ and, when $\alpha\neq\beta$, $Q^k_\alpha\cap Q^k_\beta=\emptyset$;
\item[\rm{(ii)}] for any $\alpha,\beta,k,\ell$ with $\ell\geq k$, either $Q^k_\alpha\cap Q^\ell_\beta=\emptyset$ or
$Q^\ell_\alpha\subset Q^k_\beta$;
\item[\rm{(iii)}] for each $(\ell,\,\beta)$ and each $k<\ell$, there exists a unique $\alpha$ such that
$Q^\ell_\beta\subset Q^k_\alpha$;
\item[\rm{(iv)}] there exists some negative integer $v$ and positive integer $u$ such that, for any $Q^k_\alpha$ with
$k\in\zz$ and $\alpha\in I_k$, there exists $x_{Q^k_\alpha}\in Q^k_\alpha$ satisfying that, for any $x\in Q^k_\alpha$,
$$x_{Q^k_\alpha}+B_{vk-u}\subset Q^k_\alpha\subset x+B_{vk+u}.$$
\end{enumerate}
\end{lemma}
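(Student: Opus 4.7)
The plan is to construct the cubes $\{Q^k_\alpha\}$ by a maximal packing procedure in the spirit of M.\ Christ's dyadic cube construction on spaces of homogeneous type. Since $(\rn,\rho,dx)$ is a doubling quasi-metric space (with $|B_k|=b^k$ and $\rho(x+y)\le b^\sigma[\rho(x)+\rho(y)]$ by \eqref{e2.3}), the general machinery applies. The anisotropic dilation further simplifies the picture: the admissible scales $\{b^k\}_{k\in\zz}$ are already discrete and $AB_k=B_{k+1}$, which will dictate the natural choice of the integers $v<0$ and $u>0$ appearing in (iv). Note that, because $v$ is negative, increasing $k$ corresponds to passing to a finer level.

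First, I would fix $v<0$ and select at each level $k\in\zz$ a maximal $\rho$-separated family of centers $\{x^k_\alpha\}_{\alpha\in I_k}$ with separation scale $b^{vk-u_0}$ for some $u_0\in\nn$ to be calibrated later. Maximality together with the quasi-triangle inequality yields the covering property
$$\rn=\bigcup_{\alpha\in I_k}\lf(x^k_\alpha+B_{vk+u_1}\r)\qquad\text{for some }u_1\ge u_0.$$
I would then organize the centers into an infinite rooted forest: to each $x^{k+1}_\beta$ I assign a unique parent $\pi(\beta)=\alpha\in I_k$, chosen (say, by a fixed well-ordering of $I_k$) so that $x^{k+1}_\beta\in x^k_\alpha+B_{vk+u_1}$. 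Iterating $\pi$ produces a unique ancestor of $x^{k+1}_\beta$ at every coarser level.

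I would define the cubes by fixing a Voronoi-type partition of $\rn$ (modulo a null set) at some very fine reference level using the corresponding centers, and then propagating this partition upward through the tree: $Q^k_\alpha$ is declared to be the open interior of the union of all $Q^{k+1}_\beta$ with $\pi(\beta)=\alpha$. By construction, properties (i), (ii) (with the obvious correction $Q^\ell_\beta\subset Q^k_\alpha$ in place of what appears to be a typo in the statement), and (iii) follow automatically from the tree structure. For (iv), I would take $x_{Q^k_\alpha}:=x^k_\alpha$. The outer inclusion $Q^k_\alpha\subset x+B_{vk+u}$ for every $x\in Q^k_\alpha$ follows by bounding the accumulated displacement along the descending chain of parent assignments; the geometric decay of the radii $b^{v\ell}$ for $\ell\ge k$ ensures this sum is controlled once $u$ is taken large enough. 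The inner inclusion $x^k_\alpha+B_{vk-u}\subset Q^k_\alpha$ follows from a separation argument: any $y$ in this small ball is strictly closer (in $\rho$) to $x^k_\alpha$ than to any other level-$k$ center, hence each of its finer ancestors must fall into a descendant of $x^k_\alpha$.

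The main obstacle will be the simultaneous calibration of $v$, $u_0$, $u_1$, and $u$ so that the competing demands of (i)--(iv) are met with uniform constants independent of $k$ and $\alpha$. The separation scale $b^{vk-u_0}$ has to be small enough that the outer covering balls of radius $b^{vk+u_1}$ refine into pairwise disjoint Voronoi cells, yet large enough that the telescoping displacements across levels sum to a controllable quantity when proving the outer bound in (iv). The detailed quantitative bookkeeping, which uses $b$, $\sigma$, and the spectral bounds $\lambda_-,\lambda_+$, is precisely where the fine structure of $A$ enters beyond the mere doubling of $\rho$; this is the essential content of \cite[Lemma 2.3]{blyz10}.
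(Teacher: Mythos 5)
The paper itself offers no proof of this lemma: it is imported verbatim from \cite[Lemma 2.3]{blyz10}, which is the anisotropic version of M.~Christ's dyadic-cube construction on spaces of homogeneous type, so your plan is aimed at the right target and your reading of (ii) as containing a typo (it should be $Q^\ell_\beta\subset Q^k_\alpha$) is correct. Nevertheless, as a self-contained argument your sketch has two genuine gaps. The first is the device of ``fixing a Voronoi-type partition at some very fine reference level and propagating it upward through the tree.'' The levels $k\in\zz$ are unbounded, so there is no finest reference level; a partition propagated upward from a fixed level $K$ only yields cubes for $k\le K$, and the resulting family changes as $K$ changes, so you would need a diagonal or limiting argument to extract a single coherent family for all $k\in\zz$ --- an argument you do not supply. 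Christ's construction sidesteps this entirely by defining $Q^k_\alpha$ directly as (a suitable interior of) the union, over all descendants $(\ell,\beta)$ of $(k,\alpha)$ at all finer levels $\ell\ge k$, of small balls about the centers $x^\ell_\beta$; the nesting properties (ii) and (iii) then follow from transitivity of the ancestor relation rather than from any partition at a fixed fine scale.

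The second gap concerns property (i). Once you pass to open interiors, the $Q^k_\alpha$ at a fixed level are pairwise disjoint open sets, and it is \emph{not} automatic that their union exhausts $\rn$ up to a null set: taking interiors can in principle discard a set of positive measure. In Christ's construction this is exactly where the small-boundary (thin-layer) estimate is needed, or at least an argument that the frontier between neighbouring cells is Lebesgue-null; your proposal asserts (i) ``follows automatically from the tree structure,'' which it does not. By contrast, your outline of (iv) --- outer inclusion by summing the geometrically decaying displacements along the chain of parent assignments, inner inclusion by a separation argument forcing all fine-level ancestors of a nearby point into descendants of $x^k_\alpha$ --- is the standard and correct route, with only the quantitative calibration of $v$, $u_0$, $u_1$, $u$ left to carry out.
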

In what follows, we call $\mathcal{Q}:=\{Q^k_\alpha\}_{k\in\zz,\,\alpha\in I_k}$ from Lemma \ref{l4.2} dyadic cubes and $k$ the level,
denoted by $\ell(Q^k_\alpha)$, of the dyadic cube $Q^k_\alpha$ with $k\in\zz$ and $\alpha\in I_k$.
\begin{remark}\label{r4.1}
In the definition of $(p(\cdot),\,q,\,s)$-atoms (see Definition \ref{d3.1}), if we replace dilated balls $\mathfrak{B}$
by dyadic cubes, then, from Lemma \ref{l4.2}, we deduce that the corresponding variable anisotropic atomic Hardy
space coincides with the original one (see Definition \ref{d2.5}) in the sense of equivalent quasi-norms.
\end{remark}
\begin{lemma}\label{l4.1x}
Let $p(\cdot)\in C^{\log}$ and $q\in(1,\,\infty]\cap(p_{+},\,\infty]$ with $p_+$ as in \eqref{e2.5}. Assume that
$\{\lambda_{i}\}_{i\in\nn}\subset\ccc$, $\{B^{(i)}\}_{i\in\nn}\subset\mathfrak{B}$ and $\{a_{i}\}_{i\in\nn}\in L^q$
satisfy, for any $i\in\mathbb{N}$, \,$\mathrm{supp}\, a_i\subset B^{(i)}$,
$$\|a\|_{L^q}\le \frac{|B^{(i)}|^{1/q}}{\|\chi_{B^{(i)}}\|_{L^{p(\cdot)}}}$$
and
$$\lf\|\lf\{\sum_{i\in\nn} \lf[\frac{|\lambda_i|\chi_{{B^{(i)}}}}{\|\chi_{{B^{(i)}}}\|_{L^{p(\cdot)}}}\r]^{\underline{p}}\r\}^{1/\underline{p}}\r\|
_{L^{p(\cdot)}}<\infty.$$
Then
$$\lf\|\lf[\sum_{i\in\nn}|\lambda_{i}a_{i}|^{\underline{p}}\r]^{1/\underline{p}}\r\|
_{L^{p(\cdot)}}\leq C\lf\|\lf\{\sum_{i\in\nn} \lf[\frac{|\lambda_i|\chi_{{B^{(i)}}}}{\|\chi_{{B^{(i)}}}\|_{L^{p(\cdot)}}}\r]^{\underline{p}}\r\}^{1/\underline{p}}\r\|
_{L^{p(\cdot)}},$$
where $\underline{p}$ is as in \eqref{e2.5.1} and $C$ is a positive constant independent of $\{\lambda_{i}\}_{i\in\nn}$, $\{B^{(i)}\}_{i\in\nn}$ and $\{a_{i}\}_{i\in\nn}$.
\end{lemma}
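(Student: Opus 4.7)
My plan is to derive Lemma \ref{l4.1x} directly from Lemma \ref{l3.4} by a straightforward normalization that absorbs the factor $\|\chi_{B^{(i)}}\|_{L^{p(\cdot)}}^{-1}$ in the size bound on $a_i$ into the coefficients. The two lemmas have the same structure; the only discrepancy is that the functions in Lemma \ref{l3.4} satisfy $\|a_k\|_{L^r}\le |B^{(k)}|^{1/r}$ (without a normalizing constant), while here each $a_i$ satisfies $\|a_i\|_{L^q}\le |B^{(i)}|^{1/q}/\|\chi_{B^{(i)}}\|_{L^{p(\cdot)}}$. Since the inequality to be proved is homogeneous of degree one in the $\lambda_i$'s, this discrepancy is purely cosmetic.

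Concretely, I would begin by setting, for each $i\in\nn$,
\[
\widetilde{a}_i:=\lf\|\chi_{B^{(i)}}\r\|_{L^{p(\cdot)}}\,a_i,\qquad
\widetilde{\lambda}_i:=\frac{\lambda_i}{\lf\|\chi_{B^{(i)}}\r\|_{L^{p(\cdot)}}}.
\]
Then $\supp\widetilde{a}_i\subset B^{(i)}$, the hypothesis on $\|a_i\|_{L^q}$ becomes $\|\widetilde{a}_i\|_{L^q}\le|B^{(i)}|^{1/q}$, and moreover
\[
\lambda_i a_i=\widetilde{\lambda}_i\widetilde{a}_i,\qquad
\widetilde{\lambda}_i\chi_{B^{(i)}}=\frac{\lambda_i\chi_{B^{(i)}}}{\lf\|\chi_{B^{(i)}}\r\|_{L^{p(\cdot)}}}.
\]
Thus the rescaled data $\{\widetilde{\lambda}_i\}_{i\in\nn}$, $\{\widetilde{a}_i\}_{i\in\nn}$ and $\{B^{(i)}\}_{i\in\nn}$ satisfy exactly the hypotheses of Lemma \ref{l3.4} with $r:=q$.

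Next I would verify the parameter ranges. Taking $t:=\underline{p}\in(0,\underline{p}]$ and $r:=q$, the assumption $q\in(1,\infty]\cap(p_+,\infty]$ yields $q\in[1,\infty]\cap(p_+,\infty]$, which is the range of $r$ allowed in Lemma \ref{l3.4}. Applying that lemma to the rescaled sequences then gives
\[
\lf\|\lf(\sum_{i\in\nn}\lf|\widetilde{\lambda}_i\widetilde{a}_i\r|^{\underline{p}}\r)^{1/\underline{p}}\r\|_{L^{p(\cdot)}}
\le C\lf\|\lf(\sum_{i\in\nn}\lf|\widetilde{\lambda}_i\chi_{B^{(i)}}\r|^{\underline{p}}\r)^{1/\underline{p}}\r\|_{L^{p(\cdot)}},
\]
which, after substituting back the definitions of $\widetilde{a}_i$ and $\widetilde{\lambda}_i$, is precisely the desired inequality.

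There is no real obstacle in this proof: the entire argument is a one-step reduction to Lemma \ref{l3.4}, and the only point requiring care is to check that the admissible range of $r$ in Lemma \ref{l3.4} indeed contains $q$. In presenting the argument I would therefore keep it short, emphasizing the normalization $\widetilde{a}_i=\|\chi_{B^{(i)}}\|_{L^{p(\cdot)}}a_i$ and stating the parameter check explicitly, so that the reader sees that Lemma \ref{l4.1x} is really the form of Lemma \ref{l3.4} tailored to the atomic/molecular normalization used throughout Sections \ref{s3} and \ref{s4}.
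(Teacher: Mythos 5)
Your proposal is correct. The normalization $\widetilde{a}_i:=\|\chi_{B^{(i)}}\|_{L^{p(\cdot)}}a_i$, $\widetilde{\lambda}_i:=\lambda_i/\|\chi_{B^{(i)}}\|_{L^{p(\cdot)}}$ preserves the products $\lambda_i a_i=\widetilde{\lambda}_i\widetilde{a}_i$ and converts the size condition into $\|\widetilde{a}_i\|_{L^q}\le|B^{(i)}|^{1/q}$, and the parameter check is right: $t:=\underline{p}\in(0,\underline{p}]$ and $q\in(1,\infty]\cap(p_+,\infty]\subset[1,\infty]\cap(p_+,\infty]$, so Lemma \ref{l3.4} applies verbatim and yields exactly the stated inequality (the finiteness hypothesis on the right-hand side plays no role, since the inequality is vacuous when that quantity is infinite).

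One point of comparison: the paper does not actually prove Lemma \ref{l4.1x} at all; it imports it as a black box from \cite[Lemma 4.6]{lyy17x}, just as it imports Lemma \ref{l3.4} from \cite[Lemma 4.5]{lyy17}. So your argument is not "the same as the paper's" --- it is a genuine, self-contained derivation showing that the two imported lemmas are not independent: Lemma \ref{l4.1x} is the special case $t=\underline{p}$ of Lemma \ref{l3.4} after absorbing the atomic normalization $\|\chi_{B^{(i)}}\|_{L^{p(\cdot)}}^{-1}$ into the coefficients. This buys a small economy (one external citation instead of two) and makes transparent why the $\|\chi_{B^{(i)}}\|_{L^{p(\cdot)}}$ factors appear symmetrically on both sides of the conclusion. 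The only caveat is that this reduction proves the lemma only in the form with exponent $\underline{p}$ as stated here; the source \cite{lyy17x} proves it directly, but for the purposes of this paper your version suffices, since Lemma \ref{l4.1x} is only ever invoked with the exponent $\underline{p}$.
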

\begin{lemma}\label{l4.2x}
 Let $p(\cdot)\in C^{\log}$ and $q\in(1,\,\infty]\cap(p_{+},\,\infty]$ with $p_+$ as in \eqref{e2.5}. Then $H^{p(\cdot)}_{A}\cap L^q$ is dense in $H^{p(\cdot)}_{A}$.
\end{lemma}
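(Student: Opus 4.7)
The plan is to approximate any $f\in H^{p(\cdot)}_A$ by finite partial sums of its atomic decomposition, exploiting the fact that $(p(\cdot),q,s)$-atoms are automatically in $L^q$ by their size condition, so finite sums of them lie in $H^{p(\cdot)}_A\cap L^q$.

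Concretely, I would first invoke Lemma \ref{l3.1} with some admissible $s\in[\lfloor(1/p_--1)\ln b/\ln\lambda_-\rfloor,\,\infty)\cap\zz_+$ to decompose any $f\in H^{p(\cdot)}_A$ as $f=\sum_{i\in\nn}\lambda_i a_i$ in $\cs'$, where each $a_i$ is a $(p(\cdot),q,s)$-atom supported on a dilated ball $B^{(i)}\in\mathfrak{B}$ and
\[
\Bigl\|\Bigl\{\sum_{i\in\nn}\Bigl[\tfrac{|\lambda_i|\chi_{B^{(i)}}}{\|\chi_{B^{(i)}}\|_{L^{p(\cdot)}}}\Bigr]^{\underline{p}}\Bigr\}^{1/\underline{p}}\Bigr\|_{L^{p(\cdot)}}\lesssim\|f\|_{H^{p(\cdot)}_A}.
\]
Setting $f_N:=\sum_{i=1}^{N}\lambda_i a_i$, the size estimate $\|a_i\|_{L^q}\leq|B^{(i)}|^{1/q}/\|\chi_{B^{(i)}}\|_{L^{p(\cdot)}}$ from Definition \ref{d3.1}(ii) yields $a_i\in L^q$, so $f_N\in L^q$ as a finite linear combination. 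Since $f_N$ is also in $H^{p(\cdot)}_A$ (being a finite sum of atoms), we have $f_N\in H^{p(\cdot)}_A\cap L^q$ for every $N\in\nn$.

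Next I would show $f_N\to f$ in the quasi-norm of $H^{p(\cdot)}_A$. The identity $f-f_N=\sum_{i>N}\lambda_i a_i$ holds in $\cs'$ by linearity of the $\cs'$-pairing, and the tail is itself an admissible atomic representation, so Lemma \ref{l3.1} gives
\[
\|f-f_N\|_{H^{p(\cdot)}_A}\lesssim\Bigl\|\Bigl\{\sum_{i>N}\Bigl[\tfrac{|\lambda_i|\chi_{B^{(i)}}}{\|\chi_{B^{(i)}}\|_{L^{p(\cdot)}}}\Bigr]^{\underline{p}}\Bigr\}^{1/\underline{p}}\Bigr\|_{L^{p(\cdot)}}.
\]
The function on the right decreases to $0$ pointwise a.e.\ as $N\to\infty$ and is dominated by the full sum, which lies in $L^{p(\cdot)}$ by the initial coefficient bound. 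Since $p_+<\infty$, the dominated convergence theorem in variable Lebesgue spaces applies and the right-hand side tends to $0$, whence $f_N\to f$ in $H^{p(\cdot)}_A$. This yields the claimed density.

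The main obstacle, in my view, is simply verifying the two ingredients just used: the $\cs'$-identity for the remainder (which follows at once from convergence of the original atomic series in $\cs'$) and the availability of dominated convergence for $L^{p(\cdot)}$ under the standing hypothesis $p_+<\infty$. Both are standard, so the real content of the argument lies entirely in combining the atomic characterization with the $L^q$-size condition built into the definition of atoms.
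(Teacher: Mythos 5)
The paper itself offers no proof of this lemma: it is imported verbatim from \cite[Lemma 4.7]{lyy17x}, so there is no internal argument to compare yours against. Taken on its own terms, your argument is analytically sound: finite sums of $(p(\cdot),\,q,\,s)$-atoms do lie in $L^q\cap H^{p(\cdot)}_{A}$; the tail $\sum_{i>N}\lambda_i a_i$ is an admissible atomic representation of $f-f_N$; the tail coefficient functional decreases to zero almost everywhere (its $\underline{p}$-th power is the tail of an a.e.-finite series) while being dominated by the full one; and dominated convergence in $L^{p(\cdot)}$ is legitimate precisely because $p_+<\infty$ (one can also argue directly on the modular $\varrho_{p(\cdot)}$). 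The one real reservation is logical order rather than analysis: in the source \cite{lyy17x} the density statement is Lemma~4.7 and the atomic characterization is Theorem~4.8, and density is an ingredient in the proof of the atomic characterization --- the standard route establishes density first, via the Calder\'on--Zygmund decomposition $f=g^k+\sum_i b^k_i$ of the grand maximal function at height $2^k$, with $g^k\in L^q$ and $\|f-g^k\|_{H^{p(\cdot)}_{A}}\to 0$ as $k\to\infty$. So if Lemma \ref{l3.1} is treated as a black box, as this paper does, your derivation is a valid and arguably slicker alternative; but as a from-scratch proof it risks begging the question, and you should either verify that the atomic decomposition you invoke is obtained independently of density, or replace your first step by the Calder\'on--Zygmund decomposition argument.
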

\begin{lemma}\label{l4.1xx}
Given $p(\cdot)\in\mathcal{P}(\rn)$, $L^{p(\cdot)}(\rn)$ is complete: every Cauchy sequence in $L^{p(\cdot)}(\rn)$ converges in norm.
\end{lemma}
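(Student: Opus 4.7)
The plan is to adapt the classical Riesz--Fischer completeness argument to the quasi-normed variable-exponent setting. The key tool is the $\underline{p}$-subadditivity of the Luxemburg quasi-norm recorded in Remark~\ref{r2.1}(i), namely
\[
\lf\|f+g\r\|^{\underline{p}}_{L^{p(\cdot)}}\le\lf\|f\r\|^{\underline{p}}_{L^{p(\cdot)}}+\lf\|g\r\|^{\underline{p}}_{L^{p(\cdot)}},
\]
where $\underline{p}=\min\{p_-,1\}$. After raising everything to the $\underline{p}$-th power this plays the role of the ordinary triangle inequality in the constant-exponent proof.

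Given a Cauchy sequence $\{f_n\}_{n\in\nn}\subset L^{p(\cdot)}$, I would first extract a subsequence $\{f_{n_k}\}_{k\in\nn}$ with $\|f_{n_{k+1}}-f_{n_k}\|_{L^{p(\cdot)}}\le 2^{-k}$ for every $k\in\nn$. Set $g_N(x):=\sum_{k=1}^N|f_{n_{k+1}}(x)-f_{n_k}(x)|$ and $g(x):=\lim_{N\to\infty}g_N(x)\in[0,\infty]$. Iterating the $\underline{p}$-subadditivity immediately gives the uniform bound
\[
\lf\|g_N\r\|^{\underline{p}}_{L^{p(\cdot)}}\le\sum_{k=1}^N 2^{-k\underline{p}}\le\frac{1}{2^{\underline{p}}-1}=:C_0.
\]

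To transfer this bound to $g$, I would use that $0\le g_N\nearrow g$ a.e., so for any $\lambda>0$ the classical monotone convergence theorem applied to the modular yields $\varrho_{p(\cdot)}(g/\lambda)=\lim_N\varrho_{p(\cdot)}(g_N/\lambda)$. Choosing $\lambda:=C_0^{1/\underline{p}}$ makes $\|g_N/\lambda\|_{L^{p(\cdot)}}\le 1$ for every $N$, so by Remark~\ref{r2.1}(ii), $\varrho_{p(\cdot)}(g_N/\lambda)\le 1$; passing to the limit and using the definition of $\|\cdot\|_{L^{p(\cdot)}}$ gives $\|g\|_{L^{p(\cdot)}}\le\lambda<\infty$. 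In particular $g<\infty$ almost everywhere, so the telescoping series $f_{n_1}+\sum_{k}(f_{n_{k+1}}-f_{n_k})$ converges absolutely a.e.\ to a measurable function $f$, and the pointwise bound $|f-f_{n_1}|\le g$ shows $f\in L^{p(\cdot)}$.

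To finish, I would show $f_n\to f$ in quasi-norm. Given $\varepsilon>0$, pick $N_\varepsilon$ with $\|f_n-f_m\|_{L^{p(\cdot)}}<\varepsilon$ for all $n,m\ge N_\varepsilon$; for fixed $n\ge N_\varepsilon$ the sequence $f_{n_k}-f_n$ converges a.e.\ to $f-f_n$, and the same modular Fatou/monotone-convergence argument, together with Remark~\ref{r2.1}(ii), gives $\|f-f_n\|_{L^{p(\cdot)}}\le\liminf_k\|f_{n_k}-f_n\|_{L^{p(\cdot)}}\le\varepsilon$. The main technical obstacle is precisely the bridge between the modular and the Luxemburg quasi-norm: monotone convergence and Fatou-type estimates are directly available only for $\varrho_{p(\cdot)}$, while completeness is a statement about $\|\cdot\|_{L^{p(\cdot)}}$. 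Keeping the normalizing factor $\lambda$ consistent and invoking the unit-ball identity from Remark~\ref{r2.1}(ii) at the right moments turns these modular estimates into the required quasi-norm bounds, after which the remainder is a routine transcription of the Lebesgue-space Riesz--Fischer proof.
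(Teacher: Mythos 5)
Your argument is correct; note, however, that the paper does not prove this lemma at all but simply imports it from Cruz-Uribe and Fiorenza \cite[Lemma 2.71]{cf13}, whose proof is precisely the Riesz--Fischer scheme you describe (rapidly convergent subsequence, modular monotone convergence/Fatou, and the unit-ball relation between the modular and the Luxemburg quasi-norm from Remark \ref{r2.1}(ii)). So your proposal supplies a self-contained proof along essentially the same lines as the cited source, and no gap is present.
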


\begin{lemma}\label{l4.2xx}
 Let $p(\cdot)\in C^{\log}$. Then $H^{p(\cdot)}_{A}$ is complete: every Cauchy sequence in $H^{p(\cdot)}_{A}$ converges in norm.
\end{lemma}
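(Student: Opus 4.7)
The plan is to exploit the standard quasi-Banach device: extract a subsequence whose consecutive differences are $\underline p$-summable in norm, pass to a limit in $\mathcal S'$, and then upgrade $\mathcal S'$-convergence to $H^{p(\cdot)}_A$-convergence using the pointwise sublinearity of $M_N$ together with the $\underline p$-triangle inequality for $L^{p(\cdot)}$ from Remark \ref{r2.1}(i). Given a Cauchy sequence $\{f_k\}_{k\in\nn}$ in $H^{p(\cdot)}_A$, I would select a subsequence $\{f_{k_j}\}_{j\in\nn}$ with $\|f_{k_{j+1}}-f_{k_j}\|_{H^{p(\cdot)}_A}\le 2^{-j/\underline p}$ and set $h_j:=f_{k_{j+1}}-f_{k_j}$, so that $\sum_{j\in\nn}\|h_j\|_{H^{p(\cdot)}_A}^{\underline p}<\fz$.

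First I would establish the continuous embedding $H^{p(\cdot)}_A\hookrightarrow\mathcal S'$. For any $\psi\in\mathcal S$, the function $\check\psi(x):=\psi(-x)$ has finite Schwartz seminorms, so a suitable constant $C_\psi$ makes $C_\psi^{-1}\check\psi\in\mathcal S_N$. Since the origin $\vec 0_n$ lies in $B_0$, for every $x\in -B_0$ we have $\vec 0_n\in x+B_0$, and hence $|\langle f,\psi\rangle|=|f\ast\check\psi(\vec 0_n)|\le C_\psi M_N(f)(x)$. Multiplying by $\chi_{-B_0}(x)$, taking the $L^{p(\cdot)}$ quasi-norm, and using $\|\chi_{-B_0}\|_{L^{p(\cdot)}}\in(0,\fz)$ produces $|\langle f,\psi\rangle|\lesssim\|f\|_{H^{p(\cdot)}_A}$. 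Consequently $\{f_{k_j}\}$ is Cauchy in $\mathcal S'$; let $f\in\mathcal S'$ be its weak-$\ast$ limit.

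Next I would show $f_{k_j}\to f$ in $H^{p(\cdot)}_A$. Since $f-f_{k_j}=\sum_{l\ge j}h_l$ in $\mathcal S'$, testing this distributional identity against the Schwartz function $\varphi_k(y-\cdot)$ yields the pointwise identity $(f-f_{k_j})\ast\varphi_k(y)=\sum_{l\ge j}h_l\ast\varphi_k(y)$ for every $\varphi\in\mathcal S_N$, $k\in\zz$ and $y\in\rn$, whence $|(f-f_{k_j})\ast\varphi_k(y)|\le\sum_{l\ge j}M_N(h_l)(x)$ whenever $y\in x+B_k$. Taking the supremum over $(\varphi,y,k)$ gives the pointwise bound $M_N(f-f_{k_j})(x)\le\sum_{l\ge j}M_N(h_l)(x)$. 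Applying the $\underline p$-triangle inequality, extended from finite to countable sums of non-negative functions by monotone convergence on the modular, one obtains $\|f-f_{k_j}\|_{H^{p(\cdot)}_A}^{\underline p}\le\sum_{l\ge j}\|h_l\|_{H^{p(\cdot)}_A}^{\underline p}\to 0$ as $j\to\fz$. In particular $M_N(f)\in L^{p(\cdot)}$, so $f\in H^{p(\cdot)}_A$, and the standard fact that a Cauchy sequence with a convergent subsequence is itself convergent promotes this to $f_k\to f$ in $H^{p(\cdot)}_A$.

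The hard part will be the pointwise sublinearity $M_N\bigl(\sum_l h_l\bigr)(x)\le\sum_l M_N(h_l)(x)$ under $\mathcal S'$-convergence of the series: one has to justify the identity $\bigl(\sum_l h_l\bigr)\ast\varphi_k(y)=\sum_l h_l\ast\varphi_k(y)$ uniformly in the Schwartz test function $\varphi\in\mathcal S_N$ and in $(y,k)$ with $y\in x+B_k$, which is a matter of testing the distributional convergence against the specific Schwartz function $\varphi_k(y-\cdot)$ and then exchanging supremum and sum using nonnegativity. A minor secondary issue is the extension of the countable $\underline p$-triangle inequality for non-negative measurable functions, which is a routine monotone-convergence argument from the finite version in Remark \ref{r2.1}(i).
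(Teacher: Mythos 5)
Your proposal is correct and follows essentially the same route as the paper: embed $H^{p(\cdot)}_A$ continuously into $\mathcal S'$ to obtain a distributional limit, then upgrade to norm convergence via the pointwise sublinearity of $M_N$ under $\mathcal S'$-convergence and the countable $\underline{p}$-triangle inequality from Remark \ref{r2.1}(i). The only cosmetic differences are that you phrase completeness via extraction of a rapidly convergent subsequence while the paper uses the equivalent ``$\underline{p}$-summable series converge'' criterion, and that you prove the embedding $H^{p(\cdot)}_A\hookrightarrow\mathcal S'$ directly whereas the paper cites it from \cite[Lemma 4.3]{lyy17x}.
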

\begin{proof}
We show this lemma by borrowing some ideas from the proof of \rm{\cite[Proposition 4.1]{cw14}}. To prove that $H^{p(\cdot)}_{A}$ is complete, it is sufficient to prove that if $\{f_k\}_{k=1}^{\infty}$ is a sequence in $H^{p(\cdot)}_{A}$ such that
$$\sum_{k\in\mathbb{N}}\|f_k\|_{H^{p(\cdot)}_{A}}^{\underline{p}}<\infty,$$
where $\underline{p}$ is as in \eqref{e2.5.1}, then the series $\sum_{k\in\mathbb{N}}f_k$ in $H^{p(\cdot)}_{A}$ converges in norm.

For any $j\in\mathbb{N}$, let $F_j:=\sum_{k=1}^jf_k$. From Remark \ref{r2.1}(i) and the fact that $\underline{p}$ is as in \eqref{e2.5.1}, we conclude that, for any $m, n\in\mathbb{N}$ with $m>n$,
\begin{align*}
\|F_m-F_n\|_{H^{p(\cdot)}_{A}}^{\underline{p}}=&\lf\|\sum_{k=n+1}^m f_k\r\|_{H^{p(\cdot)}_{A}}^{\underline{p}}\leq\lf\|\sum_{k=n+1}^m M_N(f_k)\r\|_{L^{p(\cdot)}}^{\underline{p}}\\
=&\lf\|\lf[\sum_{k=n+1}^m M_N(f_k)\r]^{\underline{p}}\r\|_{L^{{p(\cdot)}/\underline{p}}}
\leq\lf\|\sum_{k=n+1}^m\lf[M_N(f_k)\r]^{\underline{p}}\r\|_{L^{{p(\cdot)}/\underline{p}}}\\
\leq&\sum_{k=n+1}^m\lf\|\lf[M_N(f_k)\r]^{\underline{p}}\r\|_{L^{{p(\cdot)}/\underline{p}}}
=\sum_{k=n+1}^m\lf\|M_N(f_k)\r\|_{L^{{p(\cdot)}}}^{\underline{p}}\\
=&\sum_{k=n+1}^m\lf\|f_k\r\|_{H^{{p(\cdot)}}_{A}}^{\underline{p}}.
\end{align*}
By this, we know that $\{F_j\}_{j\in \mathbb{N}}$ is a Cauchy sequence in $H^{p(\cdot)}_{A}$. Since $H^{p(\cdot)}_{A}$ is continuously contained in
$\mathcal{S}^{'}$(see \rm{\cite[Lemma 4.3]{lyy17x}}), thus $\{F_j\}_{j\in \mathbb{N}}$ is a Cauchy sequence in $\mathcal{S}^{'}$. Therefore we know that there exists a
tempered distribution $f\in\mathcal{S}^{'}$ such that
$F_j\rightarrow f$ in $\mathcal{S}^{'}$ \,as\, $j\rightarrow\infty$.

Next we prove $f\in H^{p(\cdot)}_{A}$. Since
$$M_N(f)\leq\lim_{j\rightarrow\infty}\sum_{k=1}^jM_N(f_k),$$
by Remark \ref{r2.1}(i), we obtain
\begin{align*}
\lf\|M_N(f)\r\|_{L^{{p(\cdot)}}}^{\underline{p}}&\leq\lf\|\lim_{j\rightarrow\infty}\sum_{k=1}^j M_N(f_k)\r\|_{L^{{p(\cdot)}}}^{\underline{p}}
=\lim_{j\rightarrow\infty}\lf\|\sum_{k=1}^jM_N(f_k)\r\|_{L^{{p(\cdot)}}}^{\underline{p}}\\
&\leq\lim_{j\rightarrow\infty}\lf\|\sum_{k=1}^j[M_N(f_k)]^{\underline{p}}\r\|_{L^{{p(\cdot)}/\underline{p}}}
=\sum_{k=1}^\infty\lf\|[M_N(f_k)]^{\underline{p}}\r\|_{L^{{p(\cdot)}/\underline{p}}}\\
&=\sum_{k=1}^\infty\lf\|f_k\r\|_{H^{{p(\cdot)}}_{A}}^{\underline{p}}<\infty,
\end{align*}
which implies that $f\in H^{p(\cdot)}_{A}$.

Finally, by Remark \ref{r2.1}(i), we find that
\begin{align*}
\lf\|f-\sum_{k=1}^jf_k\r\|_{H^{{p(\cdot)}}_{A}}^{\underline{p}}&=\lf\|\lim_{s\rightarrow\infty}\sum_{k=j+1}^{s}f_k\r\|_{H^{{p(\cdot)}}_{A}}^{\underline{p}}
\leq\lim_{s\rightarrow\infty}\lf\|\sum_{k=j+1}^{s}f_k\r\|_{H^{{p(\cdot)}}_{A}}^{\underline{p}}\\
&\leq\sum_{k=j+1}^{\infty}\lf\|f_k\r\|_{H^{{p(\cdot)}}_{A}}^{\underline{p}}\rightarrow 0, \,\,\mathrm{as}\,\,j\rightarrow\infty,
\end{align*}
which implies that $\{F_j\}_{j\in\nn}$ in $H^{p(\cdot)}_{A}$ converges to $f=\sum_{k\in\mathbb{N}}f_k$ in norm. This finishes the proof of Lemma \ref{l4.2xx}.
\end{proof}

Now we prove Theorem \ref{t4.1}.
\begin{proof}[Proof of Theorem \ref{t4.1}]
First, we show that \eqref{e4.3} holds true for any $f\in H^{p(\cdot)}_{A}\cap L^r$ with $r\in(1,\,\infty]\cap(p_{+},\,\infty]$. For $f\in H^{p(\cdot)}_{A}\cap L^r$,
from Lemma \ref{l4.1yx} and Reamrk \ref{r4.1},
we know that there exist $\{\lambda_i\}_{i\in\nn}\subset\ccc$ and a sequence of
$(p(\cdot),\,\infty,\,s)$-atoms, ${\{a_i\}}_{i\in\nn}$, supported, respectively,
on ${\{Q_{i}}\}_{i\in\nn}\subset\mathcal{Q}$ such that
\begin{align*}
f=\sum_{i\in\nn} \lz_{i}a_i \ \ \mathrm{in\ } \ \cs' \,\,\mathrm{and}\,\,\mathrm{almost}\,\, \mathrm{everywhere},
\end{align*}
and
\begin{eqnarray}\label{e4.51}
 \lf\|\lf\{\sum_{i\in\nn} \lf[\frac{|\lambda_i|\chi_{Q_{{i}}}}{\lf\|\chi_{Q_{{i}}}\r\|_{L^{p(\cdot)}}}\r]^{\underline{p}}\r\}^{1/\underline{p}}\r\|
_{L^{p(\cdot)}}\lesssim\|f\|_{H^{p(\cdot)}_{A}}.
\end{eqnarray}

Let $\omega:=u+v+2\sigma$ with $u$ and $v$ as in Lemma \ref{l4.2}. Then, by Remark \ref{r2.1}(i), we find that, for any $x\in\rn$,
\begin{align}\label{e4.52}
\|T(f)\|_{L^{p(\cdot)}}^{\underline{p}}&=\lf\|T\lf(\sum_{i\in \nn}\lambda_{i}a_{i}\r)\r\|_{L^{p(\cdot)}}^{\underline{p}}
=\lf\|\sum_{i\in \nn}\lambda_{i}T(a_{i})\r\|_{L^{p(\cdot)}}^{\underline{p}}\\\nonumber
&\leq\lf\|\sum_{i\in \nn}|\lambda_{i}|T(a_{i})\chi_{{A^{\omega} Q_{{i}}}}\r\|_{L^{p(\cdot)}}^{\underline{p}}
+\lf\|\sum_{i\in \nn}|\lambda_{i}|T(a_{i})\chi_{({A^{\omega} Q_{{i}}})^{\complement}}\r\|_{L^{p(\cdot)}}^{\underline{p}}\\\nonumber
&\lesssim\lf\|\lf\{\sum_{i\in \nn}\lf[|\lambda_{i}|T(a_{i})\chi_{A^{\omega} Q_{{i}}}\r]^{\underline{p}}\r\}^{1/\underline{p}}\r\|_{L^{p(\cdot)}}^{\underline{p}}
+\lf\|\sum_{i\in \nn}|\lambda_{i}|T(a_{i})\chi_{({A^{\omega} Q_{{i}}})^{\complement}}\r\|_{L^{p(\cdot)}}^{\underline{p}}\\\nonumber
&=:\mathrm{J_{1}}+\mathrm{J_{2}},\nonumber
\end{align}
where  $A^{\omega} Q_i$ is the $A^{\omega}$ concentric expanse on $Q_i$, such that
$x_{Q_i}+A^{\omega}B_{v\ell(Q_i)-u}\subset A^{\omega}Q_i \subset x_{Q_i}+A^{\omega} B_{v\ell(Q_i)+u}$.

For the term $\mathrm{J_1}$, by the boundedness of $T$ on $L^q$ with $q\in(\max\{p_+,\,1\},\,\infty)$(see Theorem \ref{t4.0}), Lemma \ref{l4.1x} and \eqref{e4.51}, we conclude that
\begin{align*}
\mathrm{J_1}&\lesssim\lf\|\lf\{\sum_{i\in\nn} \lf[\frac{|\lambda_i|\chi_{{Q_{{i}}}}}{\lf\|\chi_{{ Q_{{i}}}}\r\|_{L^{p(\cdot)}}}\r]^{\underline{p}}\r\}^{1/\underline{p}}\r\|
_{L^{p(\cdot)}}^{\underline{p}}
\lesssim\|f\|_{H^{p(\cdot)}_{A}}^{\underline{p}}.
\end{align*}

To deal with ${\mathrm{J_{2}}}$, assume that $a_i(x)$ is a $(p(\cdot),\,\infty,\,s)$-atom supported on a dyadic cube $Q_{{i}}$.
From the size condition of $a_i(x)$ and similar to that of $(4.13)$ in the proof of \cite[Theorem 4.4]{lql18}, we conclude that, for any $x\in(A^\omega Q_{{i}})^\complement$,
\begin{eqnarray}\label{e4.13x}|T(a_i)(x)|
\ls \lf\|\chi_{Q_{{i}}}\r\|^{-1}_{L^{p(\cdot)}}\lf[M_{HL}(\chi_{Q_{{i}}})(x)\r]^\beta.
\end{eqnarray}
By \eqref{e4.13x} and an argument same as that used in the proof of \eqref{e3.10x}, we obtain
\begin{align*}
\mathrm{J_2}&\lesssim\lf\|\lf\{\sum_{i\in\nn} \lf[\frac{|\lambda_i|\chi_{{Q_{{i}}}}}{\lf\|\chi_{{ Q_{{i}}}}\r\|_{L^{p(\cdot)}}}\r]^{\underline{p}}\r\}^{1/\underline{p}}\r\|
_{L^{p(\cdot)}}^{\underline{p}}
\lesssim\|f\|_{H^{p(\cdot)}_{A}}^{\underline{p}}.
\end{align*}
Combining \eqref{e4.52} and the estimates of $\mathrm{J_1}$ and $\mathrm{J_2}$, we further conclude that
$$\|T(f)\|_{L^{p(\cdot)}}
\lesssim\|f\|_{H^{p(\cdot)}_{A}}.$$

Next, we prove that \eqref{e4.3} also holds true for any $f\in H^{p(\cdot)}_{A}$.
Let $f\in H^{p(\cdot)}_{A}$, by Lemma \ref{l4.2x}, we know that there exists a sequence
$\{f_j\}_{j\in {\mathbb{Z}_+}}\subset H^{p(\cdot)}_A\cap L^r$ with $r\in(1,\,\infty]\cap(p_{+},\,\infty]$ such that $f_j \rightarrow f$ as $j\rightarrow\infty$ in $H^{p(\cdot)}_A$.
Therefore, $\{f_j\}_{j\in {\mathbb{Z}_+}}$ is a Cauchy sequence in $H^{p(\cdot)}_A$. By this, we see that, for any $j$, $k\in {\mathbb{Z}_+}$,
$$\|T(f_j)-T(f_k)\|_{L^{p(\cdot)}}=\|T(f_j-f_k)\|_{L^{p(\cdot)}}\lesssim\|f_j-f_k\|_{H^{p(\cdot)}_A}.$$
Notice that $\{T(f_j)\}_{j\in\mathbb{Z}_+}$ is also a Cauchy sequence in $L^{p(\cdot)}$. Applying Lemma \ref{l4.1xx}, we conclude that there exist a $g \in L^{p(\cdot)}$ such that $T(f_j)\rightarrow g$ as $j\rightarrow\infty$ in $L^{p(\cdot)}$.
Let $T(f):=g$. We claim that $T(f)$ is well defined. Indeed, for any other sequence $\{h_j\}_{j\in\mathbb{Z}_+}\subset H^{p(\cdot)}_A\cap L^r$ satisfying $h_j\rightarrow f$ as $j\rightarrow\infty$ in $H^{p(\cdot)}_A$, by Remark \ref{r2.1}(i), we have
\begin{align*}
\|T(h_j)-T(f)\|_{L^{p(\cdot)}}^{\underline{p}}&\leq\|T(h_j)-T(f_j)\|_{L^{p(\cdot)}}^{\underline{p}}+\|T(f_j)-g\|_{L^{p(\cdot)}}^{\underline{p}}.\\
&\lesssim\|h_j-f_j\|_{H^{p(\cdot)}_A}^{\underline{p}}+\|T(f_j)-g\|_{L^{p(\cdot)}}^{\underline{p}}\\
&\lesssim\|h_j-f\|_{H^{p(\cdot)}_A}^{\underline{p}}+\|f-f_j\|_{H^{p(\cdot)}_A}^{\underline{p}}+\|T(f_j)-g\|_{L^{p(\cdot)}}^{\underline{p}}\rightarrow 0 \,\,\mathrm{as}\,\, j\rightarrow 0,
\end{align*}
which is wished.

From this, we see that, for any $f\in H^{p(\cdot)}_A$,
$$\|T(f)\|_{L^{p(\cdot)}}=\|g\|_{L^{p(\cdot)}}=\lim_{j\rightarrow\infty}\|T(f_j)\|_{L^{p(\cdot)}}\lesssim\lim_{j\rightarrow\infty}\|f_j\|_{H^{p(\cdot)}_A} \thicksim\|f\|_{H^{p(\cdot)}_A},$$
which implies that \eqref{e4.3} also holds true for any $f\in H^{p(\cdot)}_{A}$ and hence completes the proof of Theorem \ref{t4.1}.
%This finishes the proof of Theorem \ref{t4.1}.
\end{proof}

To prove Theorem \ref{t4.2}, we need the following technical lemma which is just \cite[Lemma 4.10]{lql18}.

\begin{lemma}\label{l4.3}
Let $p(\cdot)\in C^{\log}$, $q\in(1,\,\infty]$ and
$s\in[\lfloor(1/{p_-}-1) {\ln b/\ln \lambda_-}\rfloor,\,\infty)\cap\zz_+$ with $p_-$ as in \eqref{e2.5}.
Suppose that $T$ is an anisotropic Calder\'{o}n-Zygmund operator of order $N\in\nn$ satisfying
$$T^*(x^\alpha)=0$$
for all $|\az|\le s$ with $s<N\ln \lambda_-/\ln \lambda_+$.
Then, for any $(p(\cdot),\,q,\,s)$-atom $a$ supported on some $x_0+B_{j_0}$
with $x_0\in\rn$ and $j_0\in\zz$, $T(a)$ is a harmless constant multiple
of a $(p(\cdot),\,q,\,s,\,\varepsilon)$-molecule associated with $x_0+B_{j_0}$, where
$\ez:=N\ln{\lz_-}/\ln b+1/q'$.
\end{lemma}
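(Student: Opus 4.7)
The plan is to verify directly the two defining properties of a $(p(\cdot),q,s,\varepsilon)$-molecule for the distribution $T(a)$, namely the vanishing moments and the annular $L^q$-size estimate on each $U_j(x_0+B_{j_0})$, with decay rate $b^{-j\varepsilon}$ where $\varepsilon=N\ln\lambda_-/\ln b+1/q'$.

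\textbf{Vanishing moments.} Since $a$ is a $(p(\cdot),q,s)$-atom with $q\in(1,\infty]\cap(p_+,\infty]$, it belongs to $L^q$ with compact support and satisfies $\int a(y)y^\alpha\,dy=0$ for all $|\alpha|\le s$. Because $s<N\ln\lambda_-/\ln\lambda_+\le N$, the hypothesis $T^*(x^\alpha)=0$ applies for all $|\alpha|\le s$. By the $L^2$-duality between $T$ and $T^*$ (extended to $a\in L^q$ via the compact support), I obtain $\int_\rn T(a)(x)\,x^\alpha\,dx=\langle a,T^*(x^\alpha)\rangle=0$, which is condition (ii) of Definition \ref{d2.4x}.

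\textbf{Size estimate on $U_j(x_0+B_{j_0})$.} I split into two regimes depending on whether the annulus is close to or far from the support of $a$. Fix a threshold $j_1$ depending only on $\sigma$. For $0\le j\le j_1$, I simply use Theorem \ref{t4.0} (the $L^q$-boundedness of $T$) together with the atom size bound $\|a\|_{L^q}\le |B_{j_0}|^{1/q}/\|\chi_{x_0+B_{j_0}}\|_{L^{p(\cdot)}}$; this gives $\|T(a)\|_{L^q(U_j(x_0+B_{j_0}))}\lesssim|B_{j_0}|^{1/q}/\|\chi_{x_0+B_{j_0}}\|_{L^{p(\cdot)}}$, which is acceptable since $b^{-j\varepsilon}\sim 1$ for finitely many $j$.

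For $j>j_1$ and $x\in U_j(x_0+B_{j_0})$, geometric considerations via \eqref{e2.3} and \eqref{e2.4} give $\rho(x-y)\sim\rho(x-x_0)\sim b^{j_0+j}$ uniformly for $y\in x_0+B_{j_0}$. I invoke the vanishing moments of $a$ to write
\begin{align*}
T(a)(x)=\int_{x_0+B_{j_0}}\bigl[K(x,y)-P^s_{x_0}(x,y)\bigr]\,a(y)\,dy,
\end{align*}
where $P^s_{x_0}(x,\cdot)$ is the Taylor polynomial of degree $s$ of $K(x,\cdot)$ centered at $x_0$, expressed through the anisotropic chart. Concretely, pick $\ell\in\zz$ with $b^\ell\sim\rho(x-x_0)$ and rescale via $y=x_0+A^\ell z$; this reduces matters to the rescaled kernel $\tilde K(x,z):=K(x,A^\ell z)$, whose derivatives $\partial_z^\alpha\tilde K(x,A^{-\ell}y)$ are uniformly bounded by $Cb^{-\ell}$ for all $|\alpha|\le N$ by \eqref{e4.1}. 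Applying Taylor's theorem of order $s$ to $\tilde K(x,\cdot)$ (legal since $s+1\le N$) and unraveling the rescaling, the remainder yields a pointwise bound of the form
\begin{align*}
|K(x,y)-P^s_{x_0}(x,y)|\lesssim b^{-\ell}\,\lambda_-^{-\ell(s+1)}\,\bigl[\rho(y-x_0)\bigr]^{\mathrm{eff}},
\end{align*}
since $A^{-\ell}$ contracts the original variables by a factor controlled by $\lambda_-^{-\ell}$. Together with the $L^q\to L^1$ control on the atom via H\"older and the atom bound, this produces the pointwise estimate $|T(a)(x)|\lesssim [\rho(x-x_0)]^{-1-N\ln\lambda_-/\ln b}\cdot|B_{j_0}|/\|\chi_{x_0+B_{j_0}}\|_{L^{p(\cdot)}}\cdot\text{(normalization)}$. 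Taking the $L^q$ norm over $U_j(x_0+B_{j_0})$, whose measure is $\sim b^{j_0+j}$, and using $\rho(x-x_0)\sim b^{j_0+j}$, one factor $b^{(j_0+j)/q}$ combines with the pointwise decay; a computation yields exactly the target bound with $\varepsilon=N\ln\lambda_-/\ln b+1/q'$.

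\textbf{Main obstacle.} The principal difficulty is the anisotropic Taylor remainder bound: the smoothness hypothesis \eqref{e4.1} is formulated on the rescaled kernel $\tilde K(x,A^{-\ell}\cdot)$ rather than on $K(x,\cdot)$ directly, so translating it into a usable pointwise estimate on $|K(x,y)-P^s_{x_0}(x,y)|$ requires careful chain-rule bookkeeping with the dilation $A$. The precise exponent $\varepsilon=N\ln\lambda_-/\ln b+1/q'$ emerges from this bookkeeping: each of the $N$ effective derivatives generates a factor of $\lambda_-^{-\ell}$ upon converting to original variables, accounting for the term $N\ln\lambda_-/\ln b$, while the $1/q'$ term comes from H\"older's inequality when passing from the pointwise kernel estimate to the $L^q(U_j)$-bound (equivalently, from the $L^{q'}$ duality on the atom's support). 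Getting these two contributions to the exponent to combine cleanly, while also matching the prefactor $|B_{j_0}|^{1/q}/\|\chi_{x_0+B_{j_0}}\|_{L^{p(\cdot)}}$ required by Definition \ref{d2.4x}, is the heart of the argument.
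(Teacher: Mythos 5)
There is a genuine gap, and it sits exactly where your own write-up is internally inconsistent. In the far-region size estimate you subtract the degree-$s$ Taylor polynomial of the rescaled kernel and bound the remainder by the $(s+1)$-st derivatives; your displayed bound correctly carries the factor $\lambda_-^{-\ell(s+1)}$, i.e.\ a decay exponent $(s+1)\ln\lambda_-/\ln b$. Two lines later you assert the pointwise bound $[\rho(x-x_0)]^{-1-N\ln\lambda_-/\ln b}$ and conclude with $\varepsilon=N\ln\lambda_-/\ln b+1/q'$. An order-$s$ expansion only spends $s+1$ derivatives, so it can never produce $N$ factors of $\lambda_-^{-\ell}$; since $s+1\le N$ (indeed $s<N\ln\lambda_-/\ln\lambda_+\le N$), the exponent you actually obtain is weaker than the one claimed in the lemma, and the difference matters: the hypothesis $s<N\ln\lambda_-/\ln\lambda_+$ is precisely what makes $N\ln\lambda_-/\ln b+1/q'$ exceed the threshold $(s+1)\log_b\lambda_+$ required by Theorem \ref{t2.6}, whereas $(s+1)\ln\lambda_-/\ln b+1/q'$ need not. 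To earn the factor $N$ you must expand $\tilde K(x,\cdot)$ to degree $N-1$ and have \emph{all} polynomial terms up to degree $N-1$ annihilated by the atom's moments — which a $(p(\cdot),q,s)$-atom does not provide when $s<N-1$.

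The same missing ingredient invalidates your vanishing-moment step. Definition \ref{d4.3}/\ref{d4.1}'s companion condition $T^*(x^\gamma)=0$ (Definition 4.5, following Bownik) is a \emph{conditional} statement: it yields $\int_\rn x^\gamma Tf(x)\,dx=0$ for $|\gamma|\le s$ only for inputs $f$ with $\int_\rn x^\alpha f(x)\,dx=0$ for \emph{all} $|\alpha|<N$. A $(p(\cdot),q,s)$-atom has moments vanishing only up to order $s\le N-1$, so writing $\int_\rn T(a)(x)x^\alpha\,dx=\langle a,T^*(x^\alpha)\rangle=0$ is not an application of the stated hypothesis. The standard repair — which is what Bownik does in the proof of \cite[Theorem 9.8]{b03} and what underlies \cite[Lemma 4.10]{lql18}, the source the paper cites for this lemma without reproving it — is to exploit the independence of the atomic space on the moment parameter (Lemma \ref{l3.1}) and work with atoms having vanishing moments up to order $N-1$. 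That single adjustment fixes both defects at once: the hypothesis $T^*(x^\gamma)=0$ becomes applicable, and the Taylor expansion can be pushed to degree $N-1$ so that only the order-$N$ remainder survives, yielding $|R_N|\lesssim b^{-\ell}\lambda_-^{-(\ell-j_0)N}$ and hence the stated $\varepsilon=N\ln\lambda_-/\ln b+1/q'$ after the $L^q(U_j)$ integration. (A smaller issue: your near-region bound invokes $L^q$-boundedness of $T$, which Theorem \ref{t4.0} only supplies for $q\in(1,\infty)$, so the case $q=\infty$ permitted by the lemma needs separate treatment.)
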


\begin{proof}[Proof of Theorem \ref{t4.2}]
First, we show that \eqref{e4.4x} holds true for any $f\in H^{p(\cdot)}_{A}\cap L^r$ with $r\in(1,\,\infty]\cap(p_{+},\,\infty]$. For any $f\in H^{p(\cdot)}_{A}\cap L^r$, by Lemma \ref{l4.1yx},
we know that there exist $\{\lambda_i\}_{i\in\nn}\subset\ccc$ and a sequence of
$(p(\cdot),\,q,\,s)$-atoms, $\{a_i\}_{i\in\nn}$, supported, respectively,
on $\{{B^{(i)}}\}_{i\in\nn}\subset\mathfrak{B}$, where $B^{(i)}:=x_i+B_{\ell_i}$ with $x_i\in\rn$ and $\ell_i\in\zz$, such that
\begin{align*}
f=\sum_{i\in\nn} \lz_{i}a_i \ \ \mathrm{in\ } \ \cs' \ \ \mathrm{and}\ \ \mathrm{almost}\ \ \mathrm{everywhere},
\end{align*}
and
\begin{eqnarray}\label{e4.21}
\|f\|_{H^{p(\cdot),\,q,\,s}_{A}}
\thicksim \lf\|\lf\{\sum_{i\in\nn} \lf[\frac{|\lambda_i|\chi_{{B^{(i)}}}}{\|\chi_{{B^{(i)}}}\|_{L^{p(\cdot)}}}\r]^{\underline{p}}\r\}^{1/\underline{p}}\r\|
_{L^{p(\cdot)}}.
\end{eqnarray}
It is easy to see that, for all $N\in\mathbb{N}\cap[\lfloor({1/\underline{p}}-1)/\ln\lambda_{-}\rfloor+2,\,\infty)$ and $x\in\rn$,
\begin{align}\label{e4.52x}
&\lf\|M_{N}(T(f))\r\|_{L^{p(\cdot)}}^{\underline{p}}\\\nonumber=&\lf\|M_{N}\lf(T\lf(\sum_{i\in \nn}\lambda_{i}a_{i}\r)\r)\r\|_{L^{p(\cdot)}}^{\underline{p}}\\\nonumber
\leq&\lf\|\sum_{i\in \nn}|\lambda_{i}|M_{N}(T(a_{i}))\r\|_{L^{p(\cdot)}}^{\underline{p}}\\\nonumber
\leq&\lf\|\sum_{i\in \nn}|\lambda_{i}|M_{N}(T(a_{i}))\chi_{A^{\omega}B^{(i)}}\r\|_{L^{p(\cdot)}}^{\underline{p}}+\lf\|\sum_{i\in \nn}|\lambda_{i}|M_{N}(T(a_{i}))\chi_{({A^{\omega}B^{(i)}})^{\complement}}\r\|_{L^{p(\cdot)}}^{\underline{p}}\\\nonumber
\lesssim&\lf\|\lf\{\sum_{i\in \nn}\lf[|\lambda_{i}|M_{N}(T(a_{i}))\chi_{A^{\omega}B^{(i)}}\r]^{\underline{p}}\r\}^{1/\underline{p}}\r\|_{L^{p(\cdot)}}^{\underline{p}}+\lf\|\sum_{i\in \nn}|\lambda_{i}|M_{N}(T(a_{i}))\chi_{({A^{\omega}B^{(i)}})^{\complement}}\r\|_{L^{p(\cdot)}}^{\underline{p}}\\\nonumber
=&:\mathrm{K_{1}}+\mathrm{K_{2}},\nonumber
\end{align}
where $A^{\omega}B^{(i)}$ is the $A^{\omega}$ concentric expanse on $B^{(i)}$, that is, $A^{\omega}B^{(i)}:=x_i+A^{\omega}B_{\ell_i}$,
 $\omega:=u+v+2\sigma$ with $u$ and $v$ as in Lemma \ref{l4.2} and $\underline{p}$ as in \eqref{e2.5.1}.

For ${\mathrm{K_1}}$, from the fact that $M_{N}$ and $T$ are bounded on $L^q$ for all $q\in(1,\,\infty)$ (see Theorem \ref{t4.0}),
we know that
$$\lf\|M_{N}(T(a_i))\chi_{A^{\omega}B^{(i)}}\r\|_{L^q}\lesssim\|a_i\chi_{A^{\omega}B^{(i)}}\|_{L^q}\lesssim\frac{|B^{(i)}|^{1/q}}{\|\chi_{B^{(i)}}\|_{L^{p(\cdot)}}}.$$
From this, Lemma \ref{l4.1x} and \eqref{e4.21}, we further deduce that
\begin{align*}
\mathrm{K_1}&\lesssim\lf\|\lf\{\sum_{i\in\nn} \lf[\frac{|\lambda_i|\chi_{{B^{(i)}}}}{\|\chi_{{B^{(i)}}}\|_{L^{p(\cdot)}}}\r]^{\underline{p}}\r\}^{1/\underline{p}}\r\|
_{L^{p(\cdot)}}^{\underline{p}}
\thicksim\|f\|_{H^{p(\cdot),\,q,\,s}_{A}}^{\underline{p}}.
\end{align*}

For ${\mathrm{K_2}}$, for any $i\in\nn$, and $x\in ({A^\omega B^{(i)}})^\complement$, by Lemma \ref{l4.3}, for any $(p(\cdot),\,q,\,s)$-atom $a_i(x)$ supported on a ball $B^{(i)}$, we see that $T(a_i)$ is a harmless constant multiple
of a $(p(\cdot),\,q,\,s,\,\varepsilon)$-molecule associated with $B^{(i)}$, where
$\ez:=N\log_b(\lz_-)+1/q'$. From this and an argument similar to that used in the proof of \eqref{e3.12}, we know that
\begin{eqnarray}\label{e4.13}
M_N(T(a_i))(x)
\ls\lf\|\chi_{B^{(i)}}\r\|^{-1}_{L^{p(\cdot)}}
\lf[M_{HL}(\chi_{B^{(i)}})(x)\r]^\beta,
\end{eqnarray}
where, for any $i\in\nn$, $x_i$ denotes the centre of the dilated ball $B^{(i)}$ and
$\beta$ as in \eqref{e3.12.1}.
By \eqref{e4.13} and an argument same as that used in the proof of \eqref{e3.10x}, we obtain
\begin{align*}
\mathrm{K_2}&\lesssim\lf\|\lf\{\sum_{i\in\nn} \lf[\frac{|\lambda_i|\chi_{{B^{(i)}}}}{\|\chi_{{B^{(i)}}}\|_{L^{p(\cdot)}}}\r]^{\underline{p}}\r\}^{1/\underline{p}}\r\|
_{L^{p(\cdot)}}^{\underline{p}}
\thicksim\|f\|_{H^{p(\cdot),\,q,\,s}_{A}}^{\underline{p}}.
\end{align*}
Combining \eqref{e4.52x} and the estimates of $\mathrm{K_1}$ and $\mathrm{K_2}$, we further conclude that
$$\|T(f)\|_{H^{p(\cdot)}_A}
\lesssim\|f\|_{H^{p(\cdot),\,q,\,s}_{A}}\thicksim \|f\|_{H^{p(\cdot)}_{A}}.$$

From Lemma \ref{l4.2xx}, Remark \ref{r2.1}(i) and a similar proof of Theorem \ref{t4.1}, we further conclude that
\eqref{e4.4x} also holds true for any $f\in H^{p(\cdot)}_{A}$.
This finishes the proof of Theorem \ref{t4.2}.
\end{proof}

\textbf{Acknowledgements.} The authors would like to express their
deep thanks to the referees for their very careful reading and useful
comments which do improve the presentation of this article.

\bigskip

\noindent Wenhua Wang, Xiong Liu, Aiting Wang and Baode Li
\medskip

\noindent
College of Mathematics and System Sciences \\
Xinjiang University\\
Urumqi 830046\\
P. R. China

\smallskip

\noindent{E-mail }:\\
\texttt{1663434886@qq.com} (Wenhua Wang)\\
\texttt{1394758246@qq.com} (Xiong Liu)  \\
\texttt{2358063796@qq.com} (Aiting Wang) \\
\texttt{1246530557@qq.com} (Baode Li)

\bigskip \medskip

\end{document}